\newtheorem{remark}{Remark}[section] 
\newtheorem{example}{Example}[section] 
\newtheorem{assumption}{Assumption}[section] 
\title{Optimal convergence for the regularized solution of the model describing the competition between super- and sub- diffusions driven by fractional Brownian sheet noise
\thanks{This work was supported by the National Natural Science Foundation of China under Grant Nos. 12071195 and 12225107, and the Fundamental Research Funds for the Central Universities under Grant Nos. lzujbky-2021-it26 and lzujbky-2021-kb15.
}}
\author{Jing Sun\footnotemark[2] \and Daxin Nie\footnotemark[2]
\and Weihua Deng\footnotemark[2]\thanks{ School of Mathematics and Statistics, Gansu Key Laboratory of Applied Mathematics and Complex Systems, Lanzhou University, Lanzhou 730000, P.R. China (Email: dengwh@lzu.edu.cn).}
}
\begin{document}

\maketitle

\begin{abstract}
Super- and sub- diffusions are two typical types of anomalous diffusions in the natural world. In this work, we discuss the numerical scheme for the model describing the competition between super- and sub- diffusions driven by fractional Brownian sheet noise. Based on the obtained regulization result of the solution by using the properties of Mittag--Leffler function and the regularized noise by Wong--Zakai approximation, we make full use of the regularity of the solution operators to achieve optimal convergence of the regularized solution.  The spectral Galerkin method and the Mittag--Leffler Euler integrator are respectively used to deal with the space and time operators. In particular, by contour integral, the fast evaluation of the Mittag--Leffler Euler integrator is realized. We provide complete error analyses, which are verified by the numerical experiments.
\end{abstract}
\begin{keywords}
 model for anomalous diffusion, fractional Brownian sheet, Wong--Zakai approximation, Mittag--Leffler Euler integrator, spectral Galerkin method, fast algorithm, error analyses
\end{keywords}

\begin{AMS}
35R11, 65M60, 65M12
\end{AMS}

\pagestyle{myheadings}
\thispagestyle{plain}
\markboth{Jing Sun et. al.}{CONVERGENCE FOR STOCHASTIC FRACTIONAL DIFFUSION EQUATION}

\section{Introduction}
Anomalous diffusions are ubiquitous in the natural world, two typical types of which are super- and sub- diffusions. The model to describe super-diffusion can be a time-change of Brownian motion by the stable subordinator, while the one to characterize sub-diffusion can be a time-change of Brownian motion by the inverse of the stable subordinator \cite{Deng.2020WS}. To  model the competition between super- and sub- diffusions, one can first time-change Brownian motion by the $s$-stable subordinator then further do the time-change by the inverse of the    $\alpha$-stable subordinator (the model (\ref{eqretosol}) concerned in this paper is to make two time-changes to killed Brownian motion).  Besides, the model (\ref{eqretosol}) also includes the deterministic source term $f(u)$ and fluctuation term (fractional Brownian sheet as the external noise).

Fractional Brownian sheet (fBs) can describe the anisotropic multi-dimensional data with self-similarity and long-range dependence \cite{Kamont.1996OtfaWf}. It has also been widely used in texture classification and often acts as driven force to stochastic differential equation. In this paper, we aim to propose an efficient fully discrete scheme for the following stochastic time-space fractional diffusion equation
\begin{equation}\label{eqretosol}
	\left\{\begin{aligned}
		&\partial_{t}u(x,t)+{}_{0}\partial_{t}^{1-\alpha}A^{s} u(x,t)=f(u)+\xi^{H_{1},H_{2}}(x,t)\qquad (x,t)\in{D}\times (0,T],\\
		&u(x,0)=0\qquad x\in{D},\\
		&u(x,t)=0\qquad (x,t)\in\partial{D}\times (0,T],
	\end{aligned}\right.
\end{equation}
where ${}_{0}\partial^{1-\alpha}_{t}$ with $\alpha\in(0,1)$ is the Riemann--Liouville fractional derivative, defined as \cite{Podlubny.1999FDE}
\begin{equation*}
	{}_{0}\partial^{1-\alpha}_{t}u=\frac{1}{\Gamma(\alpha)}\frac{\partial}{\partial t}\int_{0}^{t}(t-r)^{\alpha-1}u(r)dr;
\end{equation*}
the spectral fractional Laplacian $A^{s}$ with $s\in(0,1)$ is defined by
\begin{equation}\label{eqdefAs}
	A^{s}u=\sum_{k=1}^{\infty}\lambda_{k}^{s}(u,\phi_{k})\phi_{k}
\end{equation}
with $\{\lambda_{k},\phi_{k}\}_{k=1}^{\infty}$ being the non-decreasing eigenvalues and $L^{2}$-norm normalized eigenfunctions of operator $A=-\Delta$ satisfying a zero Dirichlet boundary condition; $f(u)$ is the nonlinear source term; the noise  $\xi^{H_{1},H_{2}}$ is defined by
\begin{equation}\label{eqdefxi}
	\xi^{H_{1},H_{2}}(x,t)=\frac{\partial^{2}W^{H_{1},H_{2}}(x,t)}{\partial x\partial t}
\end{equation}
with $W^{H_{1},H_{2}}(x,t)$ being a fBs on a stochastic basis $(\Omega,\mathcal{F},(\mathcal{F}_{t})_{t\in[0,T]},\mathbb{P})$ such that
\begin{equation*}
	\begin{aligned}
		&\mathbb{E}\left[W^{H_{1},H_{2}}(x,t)W^{H_{1},H_{2}}(y,r)\right]\\
         &=\frac{x^{2H_{1}}+y^{2H_{1}}-|x-y|^{2H_{1}}}{2}\\
		&~~ \times \frac{t^{2H_{2}}+r^{2H_{2}}-|t-r|^{2H_{2}}}{2},\quad(x,t),(y,r)\in {D}\times[0,T].
	\end{aligned}
\end{equation*}
Here $D=(0,1)$ is a bounded domain, the Hurst parameters $H_{1},H_{2}\in(0,\frac{1}{2}]$, and $\mathbb{E}$ means the expectation.

In recent years, a variety of numerical methods have been proposed for stochastic partial differential equations driven by Brownian sheet \cite{Anton.2020Afdaotoshe,Gyongy.1998LafsqppdedbswnI,Gyongy.1999LafsqppdedbswnI,Quer-Sardanyons.2006Ssfaswe}, but for fractional Brownian sheet, the numerical researches are relatively few. Among them,  in \cite{Cao.2017Aseewawarn}, the authors use the piecewise linear function to approximate the fractional Brownian sheet, and propose spatial semi-discrete scheme for classical diffusion equation and wave equation driven by fractional Brownian sheet, respectively; the paper \cite{Nie.2022Nafsnfdedbrn} adopts finite difference and finite element methods to discretize the time fractional diffusion equation driven by fractional Brownian sheet, but limited by the piecewise constant approximation for the fractional Brownian sheet noise, the spatial convergence is not optimal for small $\alpha$, i.e., the spatial convergence rate doesn't exactly coincides with the Sobolev regularity of the solution.

In this paper, we first study the regularity of the solution by means of Mittag--Leffler function, and then use the Wong--Zakai approximation to regularize $\xi^{H_{1},H_{2}}$, where the spectral method is adopted to approximate the  spatial direction and piecewise constant functions to temporal direction. Different from the previous regularization method used in \cite{Cao.2017Aseewawarn,Nie.2022Nafsnfdedbrn}, the regularization method provided in this paper can make full use of the properties of the solution operator and  achieve an optimal convergence of the regularized solution.  Next, the spectral Galerkin method and Mittag--Leffler Euler integrator are used to construct the numerical scheme of \eqref{eqretosol}. However, since the solution operators of \eqref{eqretosol} lack the semigroup property and the Mittag--Leffler function is composed of an infinite series, it will result in the significant storage costs and computational complexity in numerical simulation. So in what follows, with the help of contour integral (see \cite{Stenger.1981NmboWcosf} for the detailed introduction), an $\mathcal{O}(LM)$ fast Mittag--Leffler Euler integrator for temporal discretization is presented in Section 4,  where $M$ is the number of time steps and $2L+1$ is the number of integration points.

The rest of this paper is organized as follows. In Section 2, we first  recall some properties of
Mitttag--Leffler function and stochastic integral with respect to $\xi^{H_{1},H_{2}}$, and then propose the regularity of the solution. Next, we
provide the regularized equation of (1.1) and discuss the corresponding convergence in Section 3. In
Section 4, we construct the fully discrete scheme by using spectral Galerkin method and fast Mittag--Leffler integrator, and then the strict error estimates are presented. In Section 5, extensive numerical
experiments are performed to validate the effectiveness of our theoretical results. Finally, we conclude the paper with
some discussions in the last section. Throughout the paper, $C$ means a positive constant, whose value may vary from line
to line, $\|\cdot\|$ denotes the operator norm on $L^{2}(D)$,  $\epsilon>0$ is an  arbitrarily small quantity,
and $\mathbb{E}$ means the expectation.

\section{Regularity of the solution}\label{sec2}

In this section, we discuss the regularity of the solution of \eqref{eqretosol}. To begin with, we make the following assumption for $f(u)$.
\begin{assumption}\label{eqnonassump}
	Assume the nonlinear term $f(u)$ is a deterministic mapping satisfying
	\begin{equation}
		\begin{aligned}
			&\|f(u)\|_{L^{2}(D)}\leq C(1+\|u\|_{L^{2}(D)}),\\
			&\|f(u)-f(v)\|_{L^{2}(D)}\leq C\|u-v\|_{L^{2}(D)}
		\end{aligned}
	\end{equation}
	with $C$ being a positive constant.
\end{assumption}

 To obtain the expression of the mild solution, we need to introduce Mittag--Leffler function, i.e.,
\begin{equation*}
	E_{\alpha,\beta}(z)=\sum_{k=0}^{\infty}\frac{z^{k}}{\Gamma(k\alpha+\beta)}
\end{equation*}
for $\alpha>0$, $\beta\in\mathbb{R}$, $z\in \mathbb{C}$.

Then we provide the following lemmas for Mittag--Leffler function.
\begin{lemma}[see \cite{Kilbas.2006TaAoFDE}]\label{lemMit1}
	For $\alpha,\beta>0$, $\lambda\in\mathbb{C}$, $t^{\beta-1}E_{\alpha,\beta}(\lambda t^{\alpha})$ has the following properties:
	\begin{enumerate}[(1)]
		\item for $Re(z)>0$ and $|\lambda z^{-\alpha}|<1$, the Laplace transform of $t^{\beta-1}E_{\alpha,\beta}(\lambda t^{\alpha})$ can be represented as
		\begin{equation*}
			\int_{0}^{\infty}e^{-zt}t^{\beta-1}E_{\alpha,\beta}(\lambda t^{\alpha})dt=\frac{z^{\alpha-\beta}}{z^{\alpha}- \lambda};
		\end{equation*}
		\item for $\mu\in(0,1)$, there holds
		\begin{equation*}
			{}_{0}\partial_{t}^{\mu}(t^{\beta-1}E_{\alpha,\beta}(\lambda t^{\alpha}))=t^{\beta-\mu-1}E_{\alpha,\beta-\mu}(\lambda t^{\alpha});
		\end{equation*}
		\item for $\mu\in\mathbb{N}^{*}$, one has
		\begin{equation*}
			\partial_{t}^{\mu}(t^{\beta-1}E_{\alpha,\beta}(\lambda t^{\alpha}))=t^{\beta-\mu-1}E_{\alpha,\beta-\mu}(\lambda t^{\alpha}).
		\end{equation*}
	\end{enumerate}
\end{lemma}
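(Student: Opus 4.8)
The plan is to prove all three identities by reducing each operator to its action on a single monomial and then exchanging the operator with the defining power series
$$t^{\beta-1}E_{\alpha,\beta}(\lambda t^{\alpha})=\sum_{k=0}^{\infty}\frac{\lambda^{k}t^{\alpha k+\beta-1}}{\Gamma(\alpha k+\beta)}.$$
The three parts then follow from, respectively, the Laplace integral of a power, the Riemann--Liouville derivative of a power, and the integer-order derivative of a power, each combined with a re-indexing of the resulting series.

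For part (1), I would first record the elementary Laplace identity $\int_{0}^{\infty}e^{-zt}t^{\gamma-1}\,dt=\Gamma(\gamma)z^{-\gamma}$, valid for $Re(z)>0$ and $Re(\gamma)>0$. Applying it termwise with $\gamma=\alpha k+\beta$ gives $\int_{0}^{\infty}e^{-zt}\frac{\lambda^{k}t^{\alpha k+\beta-1}}{\Gamma(\alpha k+\beta)}\,dt=\lambda^{k}z^{-\alpha k-\beta}$, so that summing produces $z^{-\beta}\sum_{k=0}^{\infty}(\lambda z^{-\alpha})^{k}$. Under the hypothesis $|\lambda z^{-\alpha}|<1$ this geometric series converges to $(1-\lambda z^{-\alpha})^{-1}$, and rearranging yields $z^{\alpha-\beta}/(z^{\alpha}-\lambda)$, as claimed.

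For parts (2) and (3) the mechanism is identical once the correct power rule is inserted. I would use the Riemann--Liouville derivative of a monomial, ${}_{0}\partial_{t}^{\mu}t^{\nu}=\frac{\Gamma(\nu+1)}{\Gamma(\nu-\mu+1)}t^{\nu-\mu}$, which for $\mu\in(0,1)$ follows from the Beta-function evaluation of the convolution integral in the definition of ${}_{0}\partial_{t}^{\mu}$ followed by one differentiation; the same closed form holds for $\mu\in\mathbb{N}^{*}$ with the ordinary derivative. Taking $\nu=\alpha k+\beta-1$, the factor $\Gamma(\alpha k+\beta)$ produced by the power rule cancels the coefficient $1/\Gamma(\alpha k+\beta)$ in the series, leaving $\sum_{k=0}^{\infty}\frac{\lambda^{k}t^{\alpha k+\beta-\mu-1}}{\Gamma(\alpha k+\beta-\mu)}$, which is exactly $t^{\beta-\mu-1}E_{\alpha,\beta-\mu}(\lambda t^{\alpha})$. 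This single computation covers both (2) and (3).

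The only genuine obstacle is justifying the termwise application of the operators, that is, interchanging the infinite sum with the Laplace integral in (1) and with the differentiation operators in (2) and (3). For this I would invoke the fact that $E_{\alpha,\beta}$ is an entire function: the factorial-type growth of $\Gamma(\alpha k+\beta)$ makes the series, as well as the series obtained after applying the power rule to each term, converge absolutely and uniformly on compact subsets of $t\in(0,\infty)$, so differentiation under the summation sign is legitimate. For the Laplace transform the same bound provides an integrable dominating function on $\{Re(z)>0\}$, allowing an appeal to Fubini's theorem, while the restriction $|\lambda z^{-\alpha}|<1$ is precisely what is needed for the interchanged geometric sum to converge.
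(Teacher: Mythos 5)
The paper gives no proof of this lemma at all --- it simply cites Kilbas, Srivastava and Trujillo --- and your termwise strategy is the standard argument found in that reference. Parts (2) and (3) of your proposal are sound: the power rule for ${}_{0}\partial_{t}^{\mu}$ (respectively for $\partial_{t}^{\mu}$) applied to $t^{\alpha k+\beta-1}$ cancels the coefficient $\Gamma(\alpha k+\beta)$ and re-indexes the series into $t^{\beta-\mu-1}E_{\alpha,\beta-\mu}(\lambda t^{\alpha})$, and absolute, locally uniform convergence of the differentiated series on compact subsets of $(0,\infty)$ justifies the termwise operation.

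Part (1), however, has a genuine gap in the interchange argument. You claim that the Gamma-function growth "provides an integrable dominating function on $\{Re(z)>0\}$" under the hypothesis $|\lambda z^{-\alpha}|<1$; it does not. Taking absolute values inside the series produces $t^{\beta-1}E_{\alpha,\beta}(|\lambda|t^{\alpha})$, which grows like $Ce^{|\lambda|^{1/\alpha}t}$ as $t\to\infty$, so the candidate dominant $e^{-Re(z)t}\,t^{\beta-1}E_{\alpha,\beta}(|\lambda|t^{\alpha})$ is integrable only when $Re(z)>|\lambda|^{1/\alpha}$; equivalently, the termwise-integrated series $\sum_{k\geq 0}|\lambda|^{k}(Re(z))^{-\alpha k-\beta}$ converges only when $(Re(z))^{\alpha}>|\lambda|$, which is strictly stronger than the stated condition $|z|^{\alpha}>|\lambda|$. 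The distinction is not cosmetic: for real $\lambda>0$ and $z=\epsilon+\mathbf{i}R$ with $R$ large, one has $|\lambda z^{-\alpha}|<1$ while the Laplace integral itself diverges, since the integrand has modulus of order $e^{(\lambda^{1/\alpha}-\epsilon)t}$. The repair is standard and short, but it is a different idea than Fubini alone: carry out your computation on the half-plane $Re(z)>|\lambda|^{1/\alpha}$, where Tonelli's theorem does apply, and then extend to the full stated region by analytic continuation --- the right-hand side $z^{\alpha-\beta}/(z^{\alpha}-\lambda)$ is analytic on the connected set $\{Re(z)>0,\ |z|^{\alpha}>|\lambda|\}$, and the left-hand side must there be interpreted as the analytic continuation of the transform, after which the identity theorem finishes the proof. (In the only way the paper uses the lemma, $\lambda=-\lambda_{k}^{s}<0$, the transform does converge on the whole right half-plane because $E_{\alpha,\beta}(-\lambda_{k}^{s}t^{\alpha})$ decays; but even then the termwise bound involves $E_{\alpha,\beta}(+\lambda_{k}^{s}t^{\alpha})$ and the same restriction-plus-continuation step is needed.)
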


\begin{lemma}\label{lemMit2}
	For $\alpha>0$, $\beta>-1$, $\sigma\in[0,s]$, $s\in(0,1)$, $t>0$, and $v\in L^{2}(D)$, we have
	\begin{equation*}
		\sum_{k=1}^{\infty}|\lambda_{k}^{\sigma}t^{\beta-1}E_{\alpha,\beta}(-\lambda_{k}^{s}t^{\alpha})(v,\phi_{k})|^{2}<Ct^{2\beta-2-2\alpha\sigma/s}\|v\|_{L^{2}(D)}^{2}.
	\end{equation*}
\end{lemma}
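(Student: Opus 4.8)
The plan is to bound each summand pointwise and then collapse the series via Parseval's identity. The crucial input is the classical decay estimate for the Mittag--Leffler function at negative real argument: for $0<\alpha<2$ (in particular for the range $\alpha\in(0,1)$ relevant to \eqref{eqretosol}) and arbitrary real $\beta$, there is a constant $C$ depending only on $\alpha,\beta$ such that
\begin{equation*}
	|E_{\alpha,\beta}(-x)|\leq \frac{C}{1+x},\qquad x\geq 0.
\end{equation*}
Since $-\lambda_k^s t^\alpha$ has argument $\pi$ and $\lambda_k^s t^\alpha\geq 0$, this is precisely the tool that converts the special-function factor into an algebraically decaying one. First I would substitute $x=\lambda_k^s t^{\alpha}$, noting $\lambda_k^{2\sigma}=(\lambda_k^{s})^{2\sigma/s}=x^{2\sigma/s}t^{-2\alpha\sigma/s}$, so that each summand is dominated by
\begin{equation*}
	|\lambda_{k}^{\sigma}t^{\beta-1}E_{\alpha,\beta}(-\lambda_{k}^{s}t^{\alpha})(v,\phi_{k})|^{2}\leq C\,t^{2\beta-2-2\alpha\sigma/s}\,\frac{x^{2\sigma/s}}{(1+x)^{2}}\,|(v,\phi_{k})|^{2}.
\end{equation*}

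The core of the argument is then to show that the scalar factor $g(x)=x^{2\sigma/s}/(1+x)^{2}$ is bounded on $(0,\infty)$ uniformly in $\sigma\in[0,s]$. Writing $p=2\sigma/s\in[0,2]$, I would split at $x=1$: for $0<x\leq 1$ one has $x^{p}\leq 1$ since $p\geq 0$, hence $g(x)\leq (1+x)^{-2}\leq 1$; for $x\geq 1$ one has $x^{p}\leq x^{2}$ since $p\leq 2$, hence $g(x)\leq x^{2}(1+x)^{-2}\leq 1$. Thus $g(x)\leq 1$ for all $x>0$, with a constant independent of $\sigma$ (and of $k$, $t$). This uniformity in $\sigma$ is exactly what lets the power $t^{2\beta-2-2\alpha\sigma/s}$ come out cleanly in front without the constant degenerating.

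Combining the two displays, every summand is bounded by $C\,t^{2\beta-2-2\alpha\sigma/s}|(v,\phi_k)|^2$, and summing over $k$ together with Parseval's identity $\sum_{k=1}^{\infty}|(v,\phi_{k})|^{2}=\|v\|_{L^{2}(D)}^{2}$ (valid because $\{\phi_k\}$ is an $L^2(D)$ orthonormal basis) yields the claimed estimate. Honestly this lemma is largely bookkeeping once the Mittag--Leffler bound is in hand; the only points requiring care are invoking the decay estimate in the form valid for \emph{negative} arguments (i.e.\ checking the sector condition $|\arg(-\lambda_k^s t^\alpha)|=\pi$ lies in the admissible range for $\alpha<2$) and verifying that the bound on $g$ is genuinely uniform over the full range $\sigma\in[0,s]$, since at the endpoint $\sigma=s$ the factor $g(x)$ no longer decays at infinity but merely stays bounded. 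The hypothesis $\sigma\leq s$ is precisely what guarantees the exponent $p-2\leq 0$ and hence boundedness rather than growth.
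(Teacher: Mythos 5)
Your proof is correct and takes essentially the same route as the paper: the same decay bound $|E_{\alpha,\beta}(z)|\leq C(1+|z|)^{-1}$ on the sector containing the negative real axis, the same rescaling to extract the factor $t^{2\beta-2-2\alpha\sigma/s}$, and Parseval's identity to collapse the sum. The only difference is cosmetic: the paper simply asserts that $(\lambda_k^s t^\alpha)^{\sigma/s}/(1+\lambda_k^s t^\alpha)$ is uniformly bounded for $\sigma\in[0,s]$, whereas you verify this explicitly by splitting at $x=1$.
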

\begin{proof}
	By the facts $|E_{\alpha,\beta}(z)|\leq C(1+|z|)^{-1}$ for $\arg(z)\in[\nu,\pi]$ with $\nu\in(\frac{\alpha\pi}{2},\min(\pi,\alpha\pi))$ \cite{Kilbas.2006TaAoFDE}, one  obtains
	\begin{equation*}
		\begin{aligned}
			&\sum_{k=1}^{\infty}|\lambda_{k}^{\sigma}t^{\beta-1}E_{\alpha,\beta}(-\lambda_{k}^{s}t^{\alpha})(v,\phi_{k})|^{2}\\
			\leq&C\sum_{k=1}^{\infty}\left |\lambda_{k}^{\sigma}\frac{t^{\beta-1}}{1+\lambda^{s}_{k}t^{\alpha}}(v,\phi_{k})\right |^{2}\\
			\leq&Ct^{2\beta-2-2\alpha\sigma/s}\sup_{k\in\mathbb{N}^{*}}\left |\frac{(\lambda_{k}^{s}t^{\alpha})^{\sigma/s}}{1+\lambda^{s}_{k}t^{\alpha}}\right |^{2}\sum_{k=1}^{\infty}\left|(v,\phi_{k})\right |^{2}\\
			\leq&Ct^{2\beta-2-2\alpha\sigma/s}\|v\|_{L^{2}(D)}^{2},
		\end{aligned}
	\end{equation*}
	where we have used the fact that $\left |\frac{(\lambda_{k}^{s}t^{\alpha})^{\sigma/s}}{1+\lambda^{s}_{k}t^{\alpha}}\right |\leq C$ for $\sigma\in[0,s]$. Thus we complete the proof.
\end{proof}

Below we introduce the solution operators $\mathcal{S}(t)$ and $G(t,x,y)$ as
\begin{equation*}
	\begin{aligned}
		&\mathcal{S}(t)u=\sum_{k=1}^{\infty}E_{\alpha,1}(-\lambda^{s}_{k}t^{\alpha})(u,\phi_{k})\phi_{k},\\
		&G(t,x,y)=\sum_{k=1}^{\infty}G_{k}(t,x,y),
	\end{aligned}
\end{equation*}
where 
\begin{equation*}
	G_{k}(t,x,y)=E_{\alpha,1}(-\lambda^{s}_{k}t^{\alpha})\phi_{k}(x)\phi_{k}(y).
\end{equation*}
Thus, the solution of \eqref{eqretosol} can be represented as
\begin{equation}\label{equsolrep}
	u(x,t)=\int_{0}^{t}\mathcal{S}(t-r)f(u(x,r))dr+\int_{0}^{t}\int_{D}G(t-r,x,y)\xi^{H_{1},H_{2}}(y,r)dydr.
\end{equation}

Besides, we recall the following It\^{o} isometry for fractional Brownian sheet noise:

\begin{theorem}[see \cite{Nie.2022Nafsnfdedbrn}]\label{thmisometry}
	Assume $g_{1}(x,t)=g_{1,1}(x)g_{1,2}(t)$ and $g_{2}(x,t)=g_{2,1}(x)g_{2,2}(t)$ satisfying $g_{1,1}(x),g_{2,1}(x)\in H^{\frac{1-2H_{1}}{2}}_{0}(D)$ and $g_{1,2}(t),g_{2,2}(t)\in H^{\frac{1-2H_{2}}{2}}_{0}((0,T))$. Then one has
	\begin{equation*}
		\begin{aligned}
			&\mathbb{E}\left (\int_{0}^{T}\int_{{D}} g_{1}(x,t)\xi^{H_{1},H_{2}}(x,t)dxdt\int_{0}^{T}\int_{{D}} g_{2}(x,t)\xi^{H_{1},H_{2}}(x,t)dxdt\right )\\
			&\qquad\qquad\leq C\left \|{}_{0}\partial^{\frac{1-2H_{2}}{2}}_{t}g_{1,2}(t)\right \|_{L^{2}((0,T))}\left \|{}_{0}\partial^{\frac{1-2H_{2}}{2}}_{t}g_{2,2}(t)\right \|_{L^{2}((0,T))}\\
			&\qquad\qquad\qquad\times\|g_{1,1}(x)\|_{H^{\frac{1-2H_{1}}{2}}_{0}(D)}\|g_{2,1}(x)\|_{H^{\frac{1-2H_{1}}{2}}_{0}(D)}
		\end{aligned}
	\end{equation*}	
	and
	\begin{equation*}
		\begin{aligned}
			&\mathbb{E}\left (\int_{0}^{T}\int_{{D}} g_{1}(x,T-t)\xi^{H_{1},H_{2}}(x,t)dxdt\int_{0}^{T}\int_{{D}} g_{2}(x,T-t)\xi^{H_{1},H_{2}}(x,t)dxdt\right )\\
			&\qquad\qquad\leq C\left \|{}_{0}\partial^{\frac{1-2H_{2}}{2}}_{t}g_{1,2}(t)\right \|_{L^{2}((0,T))}\left \|{}_{0}\partial^{\frac{1-2H_{2}}{2}}_{t}g_{2,2}(t)\right \|_{L^{2}((0,T))}\\
			&\qquad\qquad\qquad\times\|g_{1,1}(x)\|_{H^{\frac{1-2H_{1}}{2}}_{0}(D)}\|g_{2,1}(x)\|_{H^{\frac{1-2H_{1}}{2}}_{0}(D)}.
		\end{aligned}
	\end{equation*}
	Here $H^{s}_{0}(D)=\left\{u\in H^{s}(\mathbb{R}),~u=0~in~\Omega^{c}\right \}$ with norm $\|u\|_{H^{s}_{0}(D)}=\|u\|_{L^{2}(D)}+|u|_{H^{s}_{0}(D)}$ and $|u|^{2}_{H^{s}_{0}(D)}=\int\int_{\mathbb{R}\times \mathbb{R}}\frac{(u(x)-u(y))^{2}}{|x-y|^{1+2s}}dxdy$; ${}_{0}\partial^{\alpha}_{t}$ is the Riemann--Liouville fractional derivative with $\alpha\in(0,1)$; when $\alpha=0$, ${}_{0}\partial^{0}_{t}$ denotes an identity operator.
\end{theorem}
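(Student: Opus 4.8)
The plan is to recognize the bilinear functional on the left as the inner product of $g_1$ and $g_2$ in the reproducing kernel Hilbert space of the noise, and then to exploit the tensor-product structure of the covariance of $W^{H_{1},H_{2}}$. First I would record that, since this covariance factorizes into a spatial fractional-Brownian covariance (Hurst index $H_{1}$) and a temporal one (Hurst index $H_{2}$), the Wiener integral against $\xi^{H_{1},H_{2}}$ admits a harmonizable (spectral) representation in which the two variables separate. Concretely, writing $\widehat{\cdot}$ for the Fourier transform of the zero-extension off $D\times(0,T)$, I would establish the spectral identity
\begin{equation*}
	\mathbb{E}\left(\int_{0}^{T}\!\!\int_{D} g_{1}\,\xi^{H_{1},H_{2}}\int_{0}^{T}\!\!\int_{D} g_{2}\,\xi^{H_{1},H_{2}}\right)=C\int_{\mathbb{R}}\!\int_{\mathbb{R}}\widehat{g_{1}}(\zeta,\eta)\,\overline{\widehat{g_{2}}(\zeta,\eta)}\,|\zeta|^{1-2H_{1}}|\eta|^{1-2H_{2}}\,d\zeta\,d\eta ,
\end{equation*}
where the weights arise by differentiating the two fractional covariances in the paired variables $x,y$ and $t,r$, since $\partial_{x}\partial_{y}R_{H_{1}}$ and $\partial_{t}\partial_{r}R_{H_{2}}$ have Fourier symbols $|\zeta|^{1-2H_{1}}$ and $|\eta|^{1-2H_{2}}$. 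The case $H_{i}=\tfrac12$ returns the weight $1$, i.e. the classical white-noise isometry, which is a useful consistency check.

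The next step is to match the two spectral weights with the norms in the statement. On the whole line $|\eta|^{1-2H_{2}}$ is exactly the Fourier symbol of the fractional derivative of order $(1-2H_{2})/2$, so $\int|\widehat{h}(\eta)|^{2}|\eta|^{1-2H_{2}}\,d\eta$ is comparable to $\|{}_{0}\partial^{(1-2H_{2})/2}_{t}h\|_{L^{2}((0,T))}^{2}$ for $h$ supported in $(0,T)$; likewise $|\zeta|^{1-2H_{1}}$ reproduces the Gagliardo seminorm $|\cdot|_{H^{(1-2H_{1})/2}_{0}(D)}$, which is dominated by the full norm $\|\cdot\|_{H^{(1-2H_{1})/2}_{0}(D)}$. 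Here the restriction to $H^{s}_{0}$ (extension by zero) is what makes the Fourier symbol, the Riemann--Liouville derivative, and the Gagliardo seminorm mutually equivalent and the one-sided derivative well defined. For the product test functions $g_{i}=g_{i,1}(x)g_{i,2}(t)$ one has $\widehat{g_{i}}(\zeta,\eta)=\widehat{g_{i,1}}(\zeta)\,\widehat{g_{i,2}}(\eta)$, so the double integral factorizes into a product of one spatial and one temporal single integral. Applying the Cauchy--Schwarz inequality separately to each factor then yields the product of the four norms, which accounts for the inequality (rather than an equality) in the statement.

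For the second inequality the only change is the time reversal $t\mapsto T-t$ in the temporal arguments. Since reflection sends $\eta\mapsto-\eta$ in Fourier space and the weight $|\eta|^{1-2H_{2}}$ is even, the size of the temporal factor is preserved; equivalently, the $L^{2}$-norm of the fractional derivative of $g_{i,2}(T-\cdot)$ is controlled by that of $g_{i,2}$, so the same bound follows verbatim. I expect the main obstacle to be this matching step rather than the probabilistic part: establishing, with explicit constants and correct endpoint behaviour, the equivalence on the finite interval between the weight $|\eta|^{1-2H_{2}}$ and the Riemann--Liouville norm $\|{}_{0}\partial^{(1-2H_{2})/2}_{t}\cdot\|_{L^{2}}$ (and its spatial analogue). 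On $\mathbb{R}$ this is immediate from Plancherel, but on $(0,T)$ one must control boundary contributions and verify that the hypothesis $g_{i,2}\in H^{(1-2H_{2})/2}_{0}((0,T))$ genuinely removes them; this is precisely where the $H^{s}_{0}$ assumptions are used.
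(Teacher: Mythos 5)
First, a point of order: the paper does not prove this theorem at all --- it is imported verbatim from \cite{Nie.2022Nafsnfdedbrn} (note the ``see'' in the statement) --- so there is no in-paper proof to measure your argument against. What follows assesses your proposal on its own terms and against the toolkit this paper does assemble.

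Your spectral route is correct in outline, and it is the natural one for $H_{1},H_{2}\in(0,\tfrac12]$. The key identity is right: the mixed derivative kills the $t^{2H_{2}}+r^{2H_{2}}$ part of the covariance, leaving $\partial_{t}\partial_{r}R_{H_{2}}(t,r)=\tfrac12\,\partial_{u}^{2}|u|^{2H_{2}}\big|_{u=t-r}$, whose Fourier symbol is $\Gamma(2H_{2}+1)\sin(\pi H_{2})\,|\eta|^{1-2H_{2}}>0$, and similarly in space; hence for zero-extended product test functions the bilinear form factorizes into two weighted Parseval pairings, Cauchy--Schwarz applies separately in each variable, and the case $H_{i}=\tfrac12$ collapses to Plancherel, exactly as your consistency check says. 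The matching step you single out as the main obstacle is genuine but already available: it is precisely the equivalence the paper records in Remark \ref{Respace}, namely $\frac1C\|{}_{0}\partial^{\mu}_{t}u\|_{L^{2}(0,T)}\leq|u|_{H^{\mu}_{0}(0,T)}\leq C\|{}_{0}\partial^{\mu}_{t}u\|_{L^{2}(0,T)}$ for $\mu\in(0,1/2)$ from \cite{Ervin.2006Vfftsfade}, applied with $\mu=(1-2H_{2})/2$, together with the standard identification of the Gagliardo seminorm of a zero extension with the homogeneous Fourier norm; the endpoint $H_{i}=\tfrac12$ is trivial since the derivative is then the identity. Your time-reversal argument is also fine: $g\mapsto g(T-\cdot)$ multiplies the Fourier transform by a unimodular factor and reflects $\eta\mapsto-\eta$, under which the even weight is invariant.

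Two things must still be written out to make this a complete proof, though neither is a conceptual gap. First, for $H_{i}<\tfrac12$ the kernels $\partial_{x}\partial_{y}R_{H_{1}}$ and $\partial_{t}\partial_{r}R_{H_{2}}$ are genuine distributions, not locally integrable functions, so the spectral identity cannot be obtained by differentiating under the expectation pointwise; the clean route is to verify it on elementary step functions, where the Wiener integral against $\xi^{H_{1},H_{2}}$ is actually defined and the pairing reduces to finite sums of covariance evaluations, and then extend by density and by continuity of both sides in the $H^{(1-2H_{1})/2}_{0}(D)$ and $H^{(1-2H_{2})/2}_{0}((0,T))$ norms. Second, the constant in your spectral identity depends on $H_{1},H_{2}$; this is harmless, but it (together with the equivalence constants above) is where the $C$ of the statement comes from, so it should be tracked rather than written as an absolute constant.
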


\begin{remark}\label{Respace}
	For $q\in\mathbb{R}$, denote $\hat{H}^{2q}(D)=\mathbb{D}(A^{q})$ with norm $\|u\|_{\hat{H}^{2q}(D)}=\|A^{q}u\|_{L^{2}(D)}$ and $\mathbb{D}(A^{q})$ is the domain of $A^{q}$. It is well known that $H^{s}_{0}(D)$ and $\hat{H}^{s}(D)$ with $s\in (0,\frac{3}{2})$ are equivalent \cite{Bonito.2019NaotifL}. For $\mu\in(0,1/2)$, there holds $\frac{1}{C}\|{}_{0}\partial^{\mu}_{t} u\|_{L^{2}(0,T)}\leq |u|_{H^{\mu}_{0}(0,T)} \leq C\|{}_{0}\partial^{\mu}_{t} u\|_{L^{2}(0,T)}$  \cite{Ervin.2006Vfftsfade}.
\end{remark}

Now, we provide the spatial regularity of the solution.
\begin{theorem}\label{thmsoblevu}
	Let $u$ be the solution of \eqref{eqretosol} and $f(u)$ satisfy Assumption \ref{eqnonassump}. Letting $\frac{2sH_{2}}{\alpha}+H_{1}-1>0$ and $H_{1}+2s-1>0$, we have
	\begin{equation*}
		\mathbb{E}\|A^{\sigma}u\|^{2}_{L^{2}(D)}\leq C,
	\end{equation*}
	where $2\sigma\in[0,\min\{\frac{2sH_{2}}{\alpha}+H_{1}-1,H_{1}+2s-1\})$.
\end{theorem}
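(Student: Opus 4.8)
The plan is to start from the mild-solution representation \eqref{equsolrep} and decompose $u=u_{1}+u_{2}$, where $u_{1}(x,t)=\int_{0}^{t}\mathcal{S}(t-r)f(u(x,r))\,dr$ is the deterministic drift part and $u_{2}(x,t)=\int_{0}^{t}\int_{D}G(t-r,x,y)\xi^{H_{1},H_{2}}(y,r)\,dy\,dr$ is the stochastic part, so that $\mathbb{E}\|A^{\sigma}u\|_{L^{2}(D)}^{2}\le 2\mathbb{E}\|A^{\sigma}u_{1}\|_{L^{2}(D)}^{2}+2\mathbb{E}\|A^{\sigma}u_{2}\|_{L^{2}(D)}^{2}$. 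Since the drift estimate is self-referential through $f$, I would first establish the a priori bound $\sup_{t\in[0,T]}\mathbb{E}\|u(t)\|_{L^{2}(D)}^{2}\le C$ (the case $\sigma=0$) and then bootstrap to $\sigma>0$.

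For the stochastic part I would expand $A^{\sigma}u_{2}=\sum_{k}\lambda_{k}^{\sigma}I_{k}\phi_{k}$ with $I_{k}=\int_{0}^{t}\int_{D}E_{\alpha,1}(-\lambda_{k}^{s}(t-r)^{\alpha})\phi_{k}(y)\xi^{H_{1},H_{2}}(y,r)\,dy\,dr$, so that Parseval's identity gives $\mathbb{E}\|A^{\sigma}u_{2}(t)\|_{L^{2}(D)}^{2}=\sum_{k}\lambda_{k}^{2\sigma}\mathbb{E}[I_{k}^{2}]$ with no cross terms. Each $I_{k}$ integrates a separated function, so the second It\^{o} isometry in Theorem \ref{thmisometry} applies with $g_{k,1}(y)=\phi_{k}(y)$ and $g_{k,2}(\rho)=E_{\alpha,1}(-\lambda_{k}^{s}\rho^{\alpha})$. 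The spatial factor is controlled by $\|\phi_{k}\|_{H_{0}^{(1-2H_{1})/2}(D)}^{2}\le C(1+\lambda_{k}^{(1-2H_{1})/2})$, using the norm equivalence recalled in Remark \ref{Respace} and $A^{q}\phi_{k}=\lambda_{k}^{q}\phi_{k}$. For the temporal factor I would use Lemma \ref{lemMit1}(2) with $\mu=(1-2H_{2})/2$ to obtain ${}_{0}\partial_{t}^{\mu}E_{\alpha,1}(-\lambda_{k}^{s}\rho^{\alpha})=\rho^{-\mu}E_{\alpha,1-\mu}(-\lambda_{k}^{s}\rho^{\alpha})$, and then the Mittag--Leffler decay $|E_{\alpha,1-\mu}(-\lambda_{k}^{s}\rho^{\alpha})|\le C(1+\lambda_{k}^{s}\rho^{\alpha})^{-1}$ to reduce the temporal norm to $J_{k}:=\int_{0}^{t}\rho^{-2\mu}(1+\lambda_{k}^{s}\rho^{\alpha})^{-2}\,d\rho$.

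The crux of the argument is the estimate of $J_{k}$. Rescaling $\rho=\lambda_{k}^{-s/\alpha}w$ rewrites it as $\lambda_{k}^{-(1-2\mu)s/\alpha}\int_{0}^{\lambda_{k}^{s/\alpha}t}w^{-2\mu}(1+w^{\alpha})^{-2}\,dw$, where integrability at $w=0$ holds because $\mu<1/2$ (i.e. $H_{2}>0$). The behavior at the upper limit splits into two regimes: when $2\mu+2\alpha>1$, equivalently $\alpha>H_{2}$, the $w$-integral converges uniformly in $k$ and, using $1-2\mu=2H_{2}$, yields $J_{k}\le C\lambda_{k}^{-2sH_{2}/\alpha}$; when $2\mu+2\alpha\le1$, equivalently $\alpha\le H_{2}$, the truncated integral contributes an extra power $(\lambda_{k}^{s/\alpha}t)^{1-2\mu-2\alpha}$ and yields $J_{k}\le C\lambda_{k}^{-2s}$. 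Combining with the spatial factor gives $\mathbb{E}\|A^{\sigma}u_{2}(t)\|_{L^{2}(D)}^{2}\le C\sum_{k}\lambda_{k}^{2\sigma-2sH_{2}/\alpha+(1-2H_{1})/2}$ in the first regime and $C\sum_{k}\lambda_{k}^{2\sigma-2s+(1-2H_{1})/2}$ in the second.

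Invoking the Weyl asymptotics $\lambda_{k}\sim k^{2}$ on $D=(0,1)$, each series converges exactly when its exponent is below $-1/2$, which produces the two conditions $2\sigma<\tfrac{2sH_{2}}{\alpha}+H_{1}-1$ and $2\sigma<H_{1}+2s-1$; taking the minimum covers both regimes uniformly and explains the admissible range in the statement, while the positivity hypotheses are precisely what make the corresponding $\sigma=0$ series summable, thereby also delivering the a priori $L^{2}$ bound. For the drift term, Lemma \ref{lemMit2} with $\beta=1$ gives $\|A^{\sigma}\mathcal{S}(t-r)v\|_{L^{2}(D)}\le C(t-r)^{-\alpha\sigma/s}\|v\|_{L^{2}(D)}$, and since $2\sigma<2s$ forces $\alpha\sigma/s<1$, Assumption \ref{eqnonassump} together with Cauchy--Schwarz and the a priori bound yields $\mathbb{E}\|A^{\sigma}u_{1}(t)\|_{L^{2}(D)}^{2}\le C$, with a Gronwall argument closing the loop at the level $\sigma=0$. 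I expect the main obstacle to be the sharp, regime-dependent estimate of $J_{k}$ and the uniform-in-$k$ bookkeeping of the Mittag--Leffler decay inside the time integral, so that the single threshold $\min\{\cdots\}$ emerges cleanly.
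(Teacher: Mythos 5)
Your proposal is correct, and its skeleton coincides with the paper's: mild-solution decomposition into drift and stochastic parts, the It\^{o} isometry of Theorem \ref{thmisometry} applied factor-by-factor with $g_{k,1}=\phi_{k}$ and $g_{k,2}(\rho)=E_{\alpha,1}(-\lambda_{k}^{s}\rho^{\alpha})$, the spatial factor controlled through Remark \ref{Respace}, Lemma \ref{lemMit1}(2) for the Riemann--Liouville derivative of the Mittag--Leffler function, and a Gr\"{o}nwall/bootstrap argument for the nonlinearity. Where you genuinely depart from the paper is in the crux estimate of the stochastic term. The paper never computes the time integral explicitly: it inserts an artificial factor $\lambda_{k}^{-\frac{1}{4}-\epsilon}$ so that the whole $k$-sum can be fed into Lemma \ref{lemMit2} (viewing $\sum_{k}\lambda_{k}^{-1/4-\epsilon}\phi_{k}$ as an $L^{2}$ function), which imposes the two constraints $2H_{2}-2\alpha(\sigma+\frac{1-H_{1}}{2})/s>0$ and $\sigma+\frac{1-H_{1}}{2}<s$ simultaneously and costs an $\epsilon$ in the exponents. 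You instead rescale $\rho=\lambda_{k}^{-s/\alpha}w$ in $J_{k}$, split into the regimes $\alpha>H_{2}$ and $\alpha\le H_{2}$, and then sum the resulting series directly via $\lambda_{k}\sim k^{2}$. This buys two things: it is more elementary (no appeal to Lemma \ref{lemMit2} for the noise term, no dummy-function trick), and it explains structurally why the threshold is a minimum of two quantities --- exactly one of the two conditions is active in each regime, and they agree with the paper's pair of constraints because $\frac{2sH_{2}}{\alpha}\lessgtr 2s$ according as $H_{2}\lessgtr\alpha$. The paper's route, by contrast, is shorter given its lemma infrastructure and treats all parameter ranges uniformly.

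One small imprecision to patch: in the borderline case $\alpha=H_{2}$ the truncated $w$-integral grows like $\log(\lambda_{k}^{s/\alpha}t)$ rather than contributing the power $(\lambda_{k}^{s/\alpha}t)^{1-2\mu-2\alpha}=1$ that your formula suggests; since the admissible range for $2\sigma$ is an open interval, the logarithm is absorbed by $\log\lambda_{k}\le C_{\epsilon}\lambda_{k}^{\epsilon}$, so this does not affect the conclusion, but it should be stated.
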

\begin{proof}
	According to \eqref{equsolrep}, one has
	\begin{equation*}
		\begin{aligned}
			\mathbb{E}\|A^{\sigma}u\|^{2}_{L^{2}(D)}\leq& C\mathbb{E}\left\|\int_{0}^{t}A^{\sigma}\mathcal{S}(t-r)f(u(r))dr\right \|_{L^{2}(D)}^{2}\\
			&+C\mathbb{E}\left\|\int_{0}^{t}\int_{D}A^{\sigma}G(t-r,x,y)\xi^{H_{1},H_{2}}(y,r)dydr\right \|_{L^{2}(D)}^{2}\\
			\leq& \uppercase\expandafter{\romannumeral1}+\uppercase\expandafter{\romannumeral2}.
		\end{aligned}
	\end{equation*}
	Using Lemma \ref{lemMit2}, Assumption \ref{eqnonassump}, and the Cauchy-Schwarz inequality, we obtain
	\begin{equation*}
		\begin{aligned}
			\uppercase\expandafter{\romannumeral1}
			\leq &C\mathbb{E}\left(\int_{0}^{t}	\left (\sum_{k=1}^{\infty}|\lambda_{k}^{\sigma}E_{\alpha,1}(-\lambda_{k}^{s}(t-r)^{\alpha})(f(u),\phi_{k})|^{2}\right )^{1/2}dr\right)^{2}\\
			\leq &C \mathbb{E}\left(\int_{0}^{t}(t-r)^{-\frac{\sigma\alpha}{s}}\|f(u(r))\|_{L^{2}(D)}dr\right)^{2}\\
			\leq &C \int_{0}^{t}(t-r)^{-\frac{2\sigma\alpha}{s}+1-\epsilon}\mathbb{E}\|f(u(r))\|_{L^{2}(D)}^{2}dr\\
			\leq&C\left (1+\int_{0}^{t}(t-r)^{-\frac{2\sigma\alpha}{s}+1-\epsilon}\mathbb{E}\|u\|_{L^{2}(D)}^{2}dr\right ),
		\end{aligned}
	\end{equation*}
	where we require $-\frac{2\sigma\alpha}{s}+1>-1$ 
to preserve the boundedness of $\uppercase\expandafter{\romannumeral1}$, i.e., $\sigma\leq s$.
	
	As for $\uppercase\expandafter{\romannumeral2}$, Theorem \ref{thmisometry} and the definition of $G$ lead to
	\begin{equation*}
		\uppercase\expandafter{\romannumeral2}\leq C\sum_{k=1}^{\infty}\int_{0}^{t}|\lambda_{k}^{\sigma}{}_{0}\partial^{\frac{1-2H_{2}}{2}}_{r}E_{\alpha,1}(-\lambda_{k}^{s}r^{\alpha})|^{2}\|\phi_{k}(y)\|_{H^{\frac{1-2H_{1}}{2}}_{0}(D)}^{2}\|\phi_{k}(x)\|_{L^{2}(D)}^{2}dr.
	\end{equation*}
	Thus combining the fact that $\lambda_{k}\geq Ck^{2}$ for $k\in \mathbb{N}^{*}$ \cite{Laptev.1997DaNepodiEs,Li.1983OtSeatep}, Lemmas \ref{lemMit1}, \ref{lemMit2}, Remark \ref{Respace}, and the properties of $\phi_{k}$, one obtains
	\begin{equation*}
		\begin{aligned}
			\uppercase\expandafter{\romannumeral2}\leq&C\int_{0}^{t}\sum_{k=1}^{\infty}|\lambda_{k}^{\sigma+\frac{1-2H_{1}}{4}}{}_{0}\partial^{\frac{1-2H_{2}}{2}}_{r}E_{\alpha,1}(-\lambda_{k}^{s}r^{\alpha})|^{2}\|\phi_{k}(x)\|_{L^{2}(D)}^{2}dr\\
			\leq&C\int_{0}^{t}\sum_{k=1}^{\infty}|\lambda_{k}^{\sigma+\frac{1-2H_{1}}{4}}r^{-\frac{1-2H_{2}}{2}}E_{\alpha,1-\frac{1-2H_{2}}{2}}(-\lambda_{k}^{s}r^{\alpha})|^{2}\|\phi_{k}(x)\|_{L^{2}(D)}^{2}dr\\
			\leq&C\int_{0}^{t}\sum_{k=1}^{\infty}|\lambda_{k}^{\sigma+\frac{1-H_{1}}{2}+\epsilon}r^{-\frac{1-2H_{2}}{2}}E_{\alpha,1-\frac{1-2H_{2}}{2}}(-\lambda_{k}^{s}r^{\alpha})(\lambda_{k}^{-\frac{1}{4}-\epsilon}\phi_{k}(x),\phi_{k}(x))|^{2}dr\\
			\leq&C\int_{0}^{t}r^{2-1+2H_{2}-2-2\alpha(\sigma+\frac{1-H_{1}}{2}+\epsilon)/s}dr.
		\end{aligned}
	\end{equation*}
	To preserve the boundedness of $\uppercase\expandafter{\romannumeral2}$, we need to require that $2H_{2}-2\alpha(\sigma+\frac{1-H_{1}}{2})/s>0$ and $\sigma+\frac{1-H_{1}}{2}<s$, i.e., $2\sigma<\min\{\frac{2sH_{2}}{\alpha}+H_{1}-1,H_{1}+2s-1\}$. Therefore, collecting the above estimates and using the Gr\"{o}nwall inequality \cite{Elliott.1992EewsandfafemftCe} result in the desired results.
\end{proof}

Next,  the following temporal regularity of the solution can be obtained.
\begin{theorem}\label{thmholderu}
	Let $f(u)$ satisfy Assumption  \ref{eqnonassump} and $H_{2}+\frac{\alpha(H_{1}-1)}{2s}>0$. Then the solution of \eqref{eqretosol} has the estimate
	\begin{equation*}
		\mathbb{E}\left \|\frac{u(t)-u(t-\tau)}{\tau^{\gamma}}\right \|^{2}_{L^{2}(D)}\leq C
	\end{equation*}
	with $\gamma\in\left(0,H_{2}+\frac{\alpha(H_{1}-1)}{2s}\right)$.
\end{theorem}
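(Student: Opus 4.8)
The plan is to start from the mild-solution representation \eqref{equsolrep} and split the increment as $u(t)-u(t-\tau)=[u_1(t)-u_1(t-\tau)]+[u_2(t)-u_2(t-\tau)]$, where $u_1(t)=\int_0^t\mathcal S(t-r)f(u(r))\,dr$ is the drift part and $u_2(t)=\int_0^t\int_D G(t-r,x,y)\xi^{H_1,H_2}(y,r)\,dy\,dr$ is the stochastic part, so that $\mathbb E\|u(t)-u(t-\tau)\|_{L^2(D)}^2$ is bounded by the sum of the two increments. For each part I would align the integrand and decompose the increment into a ``short'' piece over $[t-\tau,t]$ and a ``difference'' piece over $[0,t-\tau]$ in which the solution operator is replaced by $\mathcal S(t-r)-\mathcal S(t-\tau-r)$ (resp.\ $G(t-r,\cdot)-G(t-\tau-r,\cdot)$). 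The stochastic part will dictate the exponent, while the drift part should allow a strictly larger H\"older exponent.

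For the drift part I would first note $\mathbb E\|f(u(r))\|_{L^2(D)}^2\le C$ by Assumption \ref{eqnonassump} and the $\sigma=0$ case of Theorem \ref{thmsoblevu}. The short piece is $O(\tau^2)$ by the Cauchy--Schwarz inequality and $\|\mathcal S(t-r)\|\le C$ (the $\sigma=0$ case of Lemma \ref{lemMit2}). For the difference piece the key is the smoothing estimate $\|[\mathcal S(\eta_2+\tau)-\mathcal S(\eta_2)]v\|_{L^2(D)}\le C\tau^{\gamma}\eta_2^{-\gamma}\|v\|_{L^2(D)}$, valid for $\gamma\in[0,1]$, which I would obtain by interpolating the uniform bound $|E_{\alpha,1}(-\lambda_k^s(\eta_2+\tau)^\alpha)-E_{\alpha,1}(-\lambda_k^s\eta_2^\alpha)|\le C(1+\lambda_k^s\eta_2^\alpha)^{-1}$ against the Lipschitz bound coming from $\partial_\eta E_{\alpha,1}(-\lambda_k^s\eta^\alpha)=\eta^{-1}E_{\alpha,0}(-\lambda_k^s\eta^\alpha)$ (Lemma \ref{lemMit1}(3)) together with $\eta_1^\alpha-\eta_2^\alpha\le\alpha\eta_2^{\alpha-1}\tau$, and then optimising over $k$; the region $\eta_2<\tau$ is absorbed by the crude bound and only adds $O(\tau^2)$. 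Integrating the resulting weight $(t-\tau-r)^{-\gamma}$ (integrable for $\gamma<1$) gives $\mathbb E\|u_1(t)-u_1(t-\tau)\|_{L^2(D)}^2\le C\tau^{2\gamma}$ for any $\gamma<1$, and since $H_2+\frac{\alpha(H_1-1)}{2s}<H_2\le\tfrac12$ the drift never constrains the exponent.

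The stochastic part produces the stated exponent, and here I would use the time-reversed It\^o isometry of Theorem \ref{thmisometry}. Expanding $u_2(t)-u_2(t-\tau)$ in $\{\phi_k\}$ and reversing time on $[0,t]$, the isometry bounds the second moment by $C\sum_k\|\phi_k\|^2_{H_0^{(1-2H_1)/2}}\|{}_0\partial_\rho^{\mu}\Psi_k\|_{L^2(0,t)}^2$ with $\mu=\frac{1-2H_2}{2}$ and $\Psi_k(\rho)=E_{\alpha,1}(-\lambda_k^s\rho^\alpha)\mathbf 1_{[0,t]}-E_{\alpha,1}(-\lambda_k^s(\rho-\tau)^\alpha)\mathbf 1_{[\tau,t]}$, while Remark \ref{Respace} and $\lambda_k\ge Ck^2$ give $\|\phi_k\|^2_{H_0^{(1-2H_1)/2}}\le C\lambda_k^{(1-2H_1)/2}$. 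The crucial simplification is that the Riemann--Liouville derivative is causal and translation covariant on the truncated shift, so that with $\chi_k(\rho)=\rho^{-\mu}E_{\alpha,1-\mu}(-\lambda_k^s\rho^\alpha)={}_0\partial^\mu_\rho E_{\alpha,1}(-\lambda_k^s\rho^\alpha)$ (Lemma \ref{lemMit1}(2)) one obtains
\begin{equation*}
\|{}_0\partial^\mu_\rho\Psi_k\|_{L^2(0,t)}^2=\int_0^\tau|\chi_k(\rho)|^2\,d\rho+\int_\tau^t|\chi_k(\rho)-\chi_k(\rho-\tau)|^2\,d\rho .
\end{equation*}
After multiplying by $\lambda_k^{(1-2H_1)/2}$ and summing, the first term is estimated exactly as the term $\mathrm{II}$ in the proof of Theorem \ref{thmsoblevu} (using $|E_{\alpha,1-\mu}(-\lambda_k^s\rho^\alpha)|\le C(1+\lambda_k^s\rho^\alpha)^{-1}$, Lemma \ref{lemMit2}, and summation in $k$), and yields precisely $C\tau^{2H_2+\alpha(H_1-1)/s}$, the hypothesis $H_2+\frac{\alpha(H_1-1)}{2s}>0$ being exactly what makes the $\rho$-integral converge at $0$.

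The second, difference, term is the main obstacle: the shift $\rho\mapsto\rho-\tau$ interacts with the remaining singular weight $\rho^{-\mu}$ near the origin. I would control it through the H\"older continuity of $\chi_k$, using $\chi_k'(\rho)=\rho^{-\mu-1}E_{\alpha,-\mu}(-\lambda_k^s\rho^\alpha)$ (Lemma \ref{lemMit1}(3)) with $|E_{\alpha,-\mu}(-\lambda_k^s\rho^\alpha)|\le C(1+\lambda_k^s\rho^\alpha)^{-1}$, interpolated against the pointwise bound on $\chi_k$, so that $|\chi_k(\rho)-\chi_k(\rho-\tau)|$ is bounded by a power of $\tau$ times a $\rho$-weight. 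The delicate point is that one must track the weight $\rho^{-\mu}$ and the decay in $\lambda_k$ simultaneously so that, after the same $k$-summation (against $\lambda_k^{(1-2H_1)/2}$) and $\rho$-integration, this term stays $O(\tau^{2\gamma})$ for every $\gamma<H_2+\frac{\alpha(H_1-1)}{2s}$ and does not lose to the first term. Collecting the drift and stochastic estimates then gives $\mathbb E\|u(t)-u(t-\tau)\|_{L^2(D)}^2\le C\tau^{2\gamma}$ for all such $\gamma$, which is the claimed bound.
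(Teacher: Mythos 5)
Your proposal is correct and, in outline, reproduces the paper's own proof: the same drift/stochastic splitting, the same short-interval versus shifted-difference decomposition, the bound via Theorem \ref{thmisometry}, and the same use of Lemma \ref{lemMit1} to differentiate $\chi_{k}(\rho)=\rho^{-\mu}E_{\alpha,1-\mu}(-\lambda_{k}^{s}\rho^{\alpha})$, $\mu=\frac{1-2H_{2}}{2}$. The one genuine difference is how the noise increment enters the isometry. You keep $u_{2}(t)-u_{2}(t-\tau)$ as a single stochastic integral over $(0,t)$, reverse time once, and use the causality/translation covariance of the Riemann--Liouville derivative on the zero-extended backward shift, so that ${}_{0}\partial^{\mu}_{\rho}\Psi_{k}=\chi_{k}(\rho)-\chi_{k}(\rho-\tau)\chi_{\rho>\tau}$ falls out identically; your $\int_{0}^{\tau}|\chi_{k}|^{2}\,d\rho$ piece then covers the paper's $\mathrm{II}_{1,1}$ and $\mathrm{II}_{2}$ simultaneously. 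The paper instead splits the increment first, applies the isometry on $(0,t-\tau)$ where the kernel difference involves the forward shift $E_{\alpha,1}(-\lambda_{k}^{s}(r+\tau)^{\alpha})$ (which does not commute with ${}_{0}\partial^{\mu}_{r}$), and must invoke Remark \ref{Respace} together with the fractional Poincar\'e inequality to trade this for the zero-extended backward-shift form on $(0,t)$; your route reaches the same expression while avoiding that norm-equivalence detour, a small but real simplification. Finally, the step you flag as the ``main obstacle'' --- the term $\int_{\tau}^{t}|\chi_{k}(\rho)-\chi_{k}(\rho-\tau)|^{2}\,d\rho$ --- is exactly the paper's $\mathrm{II}_{1,2}$, and the paper closes it by precisely the interpolation you describe: write the difference as $\int_{\rho-\tau}^{\rho}\chi_{k}'(\eta)\,d\eta$ with $\chi_{k}'(\eta)=\eta^{-1-\mu}E_{\alpha,-\mu}(-\lambda_{k}^{s}\eta^{\alpha})$, insert a weight $\eta^{1-\beta}$ and extract $\tau^{\beta}$ by Cauchy--Schwarz, then sum in $k$ (against $\lambda_{k}^{\frac{1-2H_{1}}{2}}$, using the $(1+\lambda_{k}^{s}\eta^{\alpha})^{-1}$ decay) and integrate in $\eta,\rho$, subject to $0<\beta<2H_{2}-\alpha(1-H_{1})/s$ and $2\gamma\leq\beta+\min\left\{1,\,2H_{2}-\alpha(1-H_{1})/s-\beta\right\}$. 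This confirms that the weight/decay bookkeeping you left sketched does close and yields $O(\tau^{2\gamma})$ for every $\gamma<H_{2}+\frac{\alpha(H_{1}-1)}{2s}$, so your argument is complete once that computation is written out.
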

\begin{proof}
	Simple calculations show that
	\begin{equation*}
		\begin{aligned}
			&\mathbb{E}\left \|\frac{u(t)-u(t-\tau)}{\tau^{\gamma}}\right \|^{2}_{L^{2}(D)}\\
			\leq& C\mathbb{E}\left\|\frac{1}{\tau^{\gamma}}\Bigg(\int_{0}^{t}\mathcal{S}(t-r)f(u(r))dr-\int_{0}^{t-\tau}\mathcal{S}(t-\tau-r)f(u(r))dr\Bigg)\right \|_{L^{2}(D)}^{2}\\
			&+C\mathbb{E}\Bigg\|\frac{1}{\tau^{\gamma}}\Bigg(\int_{0}^{t}\int_{D}G(t-r,x,y)\xi^{H_{1},H_{2}}(y,r)dydr\\
			&\qquad-\int_{0}^{t-\tau}\int_{D}G(t-\tau-r,x,y)\xi^{H_{1},H_{2}}(y,r)dydr\Bigg)\Bigg \|_{L^{2}(D)}^{2}\\
			\leq& \uppercase\expandafter{\romannumeral1}+\uppercase\expandafter{\romannumeral2}.
		\end{aligned}
	\end{equation*}
	For $\uppercase\expandafter{\romannumeral1}$, it can be divided into two parts:
	\begin{equation*}
		\begin{aligned}
			\uppercase\expandafter{\romannumeral1}\leq& C\mathbb{E}\Bigg\|\frac{1}{\tau^{\gamma}}\int_{0}^{t-\tau}(\mathcal{S}(t-r)-\mathcal{S}(t-\tau-r))f(u)dr\Bigg\|^{2}_{L^{2}(D)}\\
			&+C\mathbb{E}\Bigg\|\frac{1}{\tau^{\gamma}}\int_{t-\tau}^{t}\mathcal{S}(t-r)f(u)dr\Bigg\|^{2}_{L^{2}(D)}\\
			\leq&\uppercase\expandafter{\romannumeral1}_{1}+\uppercase\expandafter{\romannumeral1}_{2}.
		\end{aligned}
	\end{equation*}
	Using the Cauchy-Schwarz inequality, Lemma \ref{lemMit2}, Assumption \ref{eqnonassump}, and Theorem \ref{thmsoblevu}, one  has
	\begin{align*}
			&\uppercase\expandafter{\romannumeral1}_{1}\\
			\leq&C\tau^{-2\gamma}\mathbb{E}\Bigg(\int_{0}^{t-\tau}\\
			&\quad\Bigg(\sum_{k=1}^{\infty}(E_{\alpha,1}(-\lambda_{k}^{s}(t-r)^{\alpha})-E_{\alpha,1}(-\lambda_{k}^{s}(t-\tau-r)^{\alpha}))^{2}(f(u),\phi_{k})^{2}\Bigg)^{1/2}dr\Bigg)^{2}\\
			\leq&C\tau^{-2\gamma}\mathbb{E}\int_{0}^{t-\tau}(t-\tau-r)^{1-\epsilon}\sum_{k=1}^{\infty}\left(\int_{t-\tau-r}^{t-r}-\lambda_{k}^{s}\eta^{\alpha-1}E_{\alpha,\alpha}(-\lambda_{k}^{s}\eta^{\alpha})d\eta\right )^{2}(f(u),\phi_{k})^{2}dr\\
			~\\
			\leq&C\tau^{1-2\epsilon-2\gamma}\mathbb{E}\int_{0}^{t-\tau}(t-\tau-r)^{1-\epsilon}\\
			&\qquad\qquad\qquad\quad\int_{t-\tau-r}^{t-r}\eta^{2\epsilon}\sum_{k=1}^{\infty}\left(-\lambda_{k}^{s}\eta^{\alpha-1}E_{\alpha,\alpha}(-\lambda_{k}^{s}\eta^{\alpha})\right )^{2}(f(u),\phi_{k})^{2}d\eta dr\\
			\leq& C\tau^{1-2\epsilon-2\gamma}\int_{0}^{t-\tau}(t-\tau-r)^{1-\epsilon}\int_{t-\tau-r}^{t-r}\eta^{-2+2\epsilon}\mathbb{E}\|f(u)\|_{L^{2}(D)}^{2}d\eta dr\\
			\leq&C\tau^{2-2\epsilon-2\gamma}\int_{0}^{t-\tau}(t-\tau-r)^{-1+\epsilon}dr,
	\end{align*}
	where $\gamma\in[0,1)$. Similarly, for $\gamma\in[0,1)$, we obtain
	\begin{equation*}
		\begin{aligned}
			\uppercase\expandafter{\romannumeral1}_{2}\leq& C\tau^{1-2\gamma}\mathbb{E}\int_{t-\tau}^{t}\sum_{k=1}^{\infty}(E_{\alpha,1}(-\lambda^{s}_{k}r^{\alpha})^{2}(f(u),\phi_{k}))^{2}dr\leq C.
		\end{aligned}
	\end{equation*}
	For $\uppercase\expandafter{\romannumeral2}$, similar to the derivation of $\uppercase\expandafter{\romannumeral1}$, there holds
	\begin{equation*}
		\begin{aligned}
			\uppercase\expandafter{\romannumeral2}\leq& C\mathbb{E}\Bigg\|\frac{1}{\tau^{\gamma}}\int_{0}^{t-\tau}\int_{D}(G(t-r,x,y)-G(t-\tau-r,x,y))\xi^{H_{1},H_{2}}(y,r)dydr\Bigg\|_{L^{2}(D)}^{2}\\
			&+C\mathbb{E}\Bigg\|\frac{1}{\tau^{\gamma}}\int_{t-\tau}^{t}\int_{D}G(t-r,x,y)\xi^{H_{1},H_{2}}(y,r)dydr\Bigg\|_{L^{2}(D)}^{2}\leq \uppercase\expandafter{\romannumeral2}_{1}+\uppercase\expandafter{\romannumeral2}_{2}.
		\end{aligned}
	\end{equation*}
	Theorem \ref{thmisometry} shows that
	\begin{equation*}
			\begin{aligned}
				\uppercase\expandafter{\romannumeral2}_{1}\leq&C\tau^{-2\gamma}\int_{0}^{t-\tau}\sum_{k=1}^{\infty}\left({}_{0}\partial^{\frac{1-2H_{2}}{2}}_{r}E_{\alpha,1}(-\lambda_{k}^{s}(r+\tau)^{\alpha})-{}_{0}\partial^{\frac{1-2H_{2}}{2}}_{r}E_{\alpha,1}(-\lambda_{k}^{s}r^{\alpha})\right)^{2}\\		
				&\quad \|\phi_{k}(y)\|_{H^{\frac{1-2H_{1}}{2}}_{0}(D)}^{2}\|\phi_{k}(x)\|_{L^{2}(D)}^{2}dr.\\
		\end{aligned}
	\end{equation*}
	Using Remark \ref{Respace} and the fractional Poincar\'e inequality \cite{Acosta.2017AfLeRosafea}, one has
	\begin{equation*}
		\begin{aligned}
			&\int_{0}^{t-\tau}\left({}_{0}\partial^{\frac{1-2H_{2}}{2}}_{r}E_{\alpha,1}(-\lambda_{k}^{s}(r+\tau)^{\alpha})-{}_{0}\partial^{\frac{1-2H_{2}}{2}}_{r}E_{\alpha,1}(-\lambda_{k}^{s}r^{\alpha})\right)^{2}dr\\
			\leq& C\|E_{\alpha,1}(-\lambda_{k}^{s}(r+\tau)^{\alpha})-E_{\alpha,1}(-\lambda_{k}^{s}(r)^{\alpha})\|_{H^{\frac{1-2H_{2}}{2}}_{0}(0,t-\tau)}^{2}\\
			\leq& C\|E_{\alpha,1}(-\lambda_{k}^{s}(r)^{\alpha})-E_{\alpha,1}(-\lambda_{k}^{s}(r-\tau)^{\alpha})\chi_{r>\tau}(r)\|_{H^{\frac{1-2H_{2}}{2}}_{0}(0,t)}^{2}\\
			\leq&C\int_{0}^{t}\left({}_{0}\partial^{\frac{1-2H_{2}}{2}}_{r}E_{\alpha,1}(-\lambda_{k}^{s}r^{\alpha})-{}_{0}\partial^{\frac{1-2H_{2}}{2}}_{r}E_{\alpha,1}(-\lambda_{k}^{s}(r-\tau)^{\alpha})\chi_{r>\tau}(r)\right)^{2}dr,\\
		\end{aligned}
	\end{equation*}
which leads to, for $\beta\in(0,1]$,
	\begin{equation*}
		\begin{aligned}
				\uppercase\expandafter{\romannumeral2}_{1}\leq&C\tau^{-2\gamma}\int_{0}^{t}\sum_{k=1}^{\infty}\|\phi_{k}(y)\|_{H^{\frac{1-2H_{1}}{2}}_{0}(D)}^{2}\|\phi_{k}(x)\|_{L^{2}(D)}^{2}\\
				&~\left(r^{-\frac{1-2H_{2}}{2}}E_{\alpha,1-\frac{1-2H_{2}}{2}}(-\lambda_{k}^{s}r^{\alpha})-(r-\tau)^{-\frac{1-2H_{2}}{2}}E_{\alpha,1-\frac{1-2H_{2}}{2}}(-\lambda_{k}^{s}(r-\tau)^{\alpha})\chi_{r-\tau>0}(r)\right)^{2} dr\\
				\leq&C\tau^{-2\gamma}\int_{0}^{\tau}\\
				&\quad\sum_{k=1}^{\infty}\left(-\lambda_{k}^{\frac{1-H_{1}}{2}+\epsilon}r^{-\frac{1-2H_{2}}{2}}E_{\alpha,1-\frac{1-2H_{2}}{2}}(-\lambda_{k}^{s}r^{\alpha})\right )^{2}(\lambda_{k}^{-\frac{1}{4}-\epsilon}\phi_{k},\phi_{k})^{2}d\eta dr\\
				&+C\tau^{\beta-2\gamma}\int_{\tau}^{t}\int_{r-\tau}^{r}\eta^{1-\beta}\\
				&\quad\sum_{k=1}^{\infty}\left(-\lambda_{k}^{\frac{1-H_{1}}{2}+\epsilon}\eta^{-1-\frac{1-2H_{2}}{2}}E_{\alpha,-\frac{1-2H_{2}}{2}}(-\lambda_{k}^{s}\eta^{\alpha})\right )^{2}(\lambda_{k}^{-\frac{1}{4}-\epsilon}\phi_{k},\phi_{k})^{2}d\eta dr\\
				\leq& \uppercase\expandafter{\romannumeral2}_{1,1}+\uppercase\expandafter{\romannumeral2}_{1,2}.
			\end{aligned}
		\end{equation*}
	Simple calculations result in 
	\begin{equation*}
		\begin{aligned}
				\uppercase\expandafter{\romannumeral2}_{1,1}\leq& C\tau^{-2\gamma}\int_{0}^{\tau}r^{-1+2H_{2}-2\alpha\left(\frac{1-H_{1}}{2}+\epsilon\right)/s}d\eta dr\\			
				\leq&C\tau^{2H_{2}-2\alpha(\frac{1-H_{1}}{2}+\epsilon)/s-2\gamma},
			\end{aligned}
		\end{equation*}
	where $2\gamma<2H_{2}-2\alpha\left(\frac{1-H_{1}}{2}\right)/s$. As for $\uppercase\expandafter{\romannumeral2}_{1,2}$, we have
		\begin{equation*}
			\begin{aligned}
				\uppercase\expandafter{\romannumeral2}_{1,2}\leq&  C\tau^{\beta-2\gamma}\int_{\tau}^{t}\int_{r-\tau}^{r}\eta^{1-\beta-1+2H_{2}-2-2\alpha\left(\frac{1-H_{1}}{2}+\epsilon\right)/s}d\eta dr\\
				\leq&C\tau^{\beta-2\gamma}\int_{0}^{t-\tau}r^{-1+\epsilon}\int_{r}^{r+\tau}\eta^{-\beta-1+2H_{2}-2\alpha\left(\frac{1-H_{1}}{2}+\epsilon\right)/s-\epsilon}d\eta dr,
			\end{aligned}
		\end{equation*}
		where $-\beta+2H_{2}-2\alpha\left(\frac{1-H_{1}}{2}+\epsilon\right)/s>0$ and $2\gamma\leq\beta+\min\left(1,-\beta+2H_{2}-2\alpha\left(\frac{1-H_{1}}{2}+\epsilon\right)/s \right)$ need to be satisfied, i.e., $\gamma<\min\{H_{2}+\frac{\alpha(H_{1}-1)}{2s},1+\epsilon\}$.
	Similarly for $\uppercase\expandafter{\romannumeral2}_{2}$, when $\gamma<H_{2}+\frac{\alpha(H_{1}-1)}{2s}$, it holds
	\begin{equation*}
		\begin{aligned}
			\uppercase\expandafter{\romannumeral2}_{2}\leq & C\frac{1}{\tau^{2\gamma}}\int_{0}^{\tau}\sum_{k=1}^{\infty}({}_{0}\partial^{\frac{1-2H_{2}}{2}}_{r}E_{\alpha,1}(-\lambda_{k}^{s}r^{\alpha}))^{2}\|\phi_{k}(y)\|_{H^{\frac{1-2H_{1}}{2}}_{0}(D)}^{2}\|\phi_{k}(x)\|_{L^{2}(D)}^{2}dr\\	\leq&C\frac{1}{\tau^{2\gamma}}\int_{0}^{\tau}\sum_{k=1}^{\infty}(\lambda_{k}^{\frac{1-H_{1}}{2}+\epsilon}r^{-\frac{1-2H_{2}}{2}}E_{\alpha,1-\frac{1-2H_{2}}{2}}(-\lambda_{k}^{s}r^{\alpha}))^{2}(\lambda_{k}^{-\frac{1}{4}-\epsilon}\phi_{k}(x),\phi_{k}(x))^{2}dr\\
			\leq&C\tau^{2H_{2}+\frac{\alpha(H_{1}-1-\epsilon)}{s}-2\gamma}\leq C.
		\end{aligned}
	\end{equation*}
	Combining the above estimates yields the desired results.
\end{proof}

\section{Wong--Zakai approximation for fractional Brownian sheet}
In recent years, several types of Wong--Zakai approximation have been proposed to regularize the noise \cite{Ikeda.1981SDEaDP,Sussmann.1978Otgbdasode,Wong.1965Otcooitsi,Wong.1965Otrboasde}. In this part, we use the spectral bases $\{\phi_{k}\}_{k=1}^{N}$ and piecewise constant function to approximate $\xi^{H_{1},H_{2}}(x,t)$, i.e.,
\begin{equation}\label{eqappronoise}
	\xi^{H_{1},H_{2}}_{R}(x,t)=\sum_{i=1}^{M}\sum_{k=1}^{N}\frac{1}{\tau}\int_{I_{i}}\int_{D}\phi_{k}(y)\xi^{H_{1},H_{2}}(y,r)dydr\phi_{k}(x)\chi_{I_{i}}(t),
\end{equation}
where $\chi_{I_{i}}$ is the characteristic function on $I_{i}=(t_{i-1},t_{i}]$ $(i=1,2,\ldots M)$, $t_{j}=j\tau$ $(j=0,1,2,\ldots,M)$, and $T=t_{M}$. Then we can introduce the regularized solution $u_{R}(x,t)$ satisfying
\begin{equation}\label{eqreglar}
	\left\{\begin{aligned}
		&\partial_{t}u_{R}(x,t)+{}_{0}\partial_{t}^{1-\alpha}A^{s} u_{R}(x,t)=f(u_{R})+\xi^{H_{1},H_{2}}_{R}(x,t)\qquad (x,t)\in{D}\times (0,T],\\
		&u_{R}(x,0)=0\qquad x\in{D},\\
		&u_{R}(x,t)=0\qquad (x,t)\in\partial{D}\times (0,T],
	\end{aligned}\right.
\end{equation}
To obtain the solution of the regularized equation \eqref{eqreglar}, we introduce
\begin{equation*}
	G_{R,N}(t,r,x,y)=\sum_{k=1}^{N}G_{R,k}(t,r,x,y),
\end{equation*}
where
\begin{equation*}
\begin{aligned}
		&G_{R,k}(t,r,x,y)=\mathcal{S}_{R,k}(t,r)\phi_{k}(x)\phi_{k}(y),\\
		&\mathcal{S}_{R,k}(t,r)=\frac{1}{\tau}\sum_{i=1}^{M}\chi_{I_{i}}(r)\int_{I_{i}}E_{\alpha,1}(-\lambda_{k}^{s}(t-\bar{r})^{\alpha})\chi_{(0,t)}(t-\bar{r})d\bar{r}.
\end{aligned}
\end{equation*}
Therefore, $u_{R}$ can be represented as
\begin{equation}\label{eqregularsol}
	u_{R}(x,t)=\int_{0}^{t}\mathcal{S}(t-r)f(u_{R})dr+\int_{0}^{T}\int_{D}G_{R,N}(t-r,x,y)\xi^{H_{1},H_{2}}(y,r)dydr.
\end{equation}

\begin{remark}
	From the derivation of \eqref{eqregularsol}, one can note that the approximation of the noise can be regarded as the approximation of solution operator $G(t,x,y)$. Different from the previous Wong--Zakai approximation provided in \cite{Cao.2017Aseewawarn,Nie.2022Nafsnfdedbrn}, our proposed approximation \eqref{eqappronoise} can  make full use of the regularity of solution operators, which leads to the optimal convergence order of the regularized solution (refer to Theorem \ref{theoremregularizaed1} for the details).
\end{remark}

\begin{lemma}\label{lemEkest}
Let  $k>0$, $\sigma\in[0,1]$, and $\gamma\geq 0$. If $\gamma+\frac{\alpha\sigma}{s}<\frac{1}{2}$, then there exists a uniform constant $C$ such that
	\begin{equation}
		\|{}_{0}\partial^{\gamma}_{t}\lambda^{\sigma}_{k}E_{\alpha,1}(-\lambda_{k}^{s}t^{\alpha})\|_{L^{2}((0,T))}\leq C.
	\end{equation}
\end{lemma}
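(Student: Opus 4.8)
The plan is to turn the fractional derivative of a Mittag--Leffler function back into a single Mittag--Leffler function via Lemma~\ref{lemMit1}, apply the same scalar decay bound $|E_{\alpha,\beta}(z)|\le C(1+|z|)^{-1}$ that was used in the proof of Lemma~\ref{lemMit2}, and then evaluate the resulting one-dimensional integral by a scaling that isolates the power of $\lambda_k$. Note first that the hypothesis $\gamma+\frac{\alpha\sigma}{s}<\frac12$ together with $\sigma\ge0$ forces $\gamma<\frac12$, so in particular $\gamma\in(0,1)$ and property (2) of Lemma~\ref{lemMit1} applies (the case $\gamma=0$ is the plain $L^2$-bound and is subsumed by the computation below). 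Writing $E_{\alpha,1}(-\lambda_k^st^\alpha)=t^{1-1}E_{\alpha,1}(-\lambda_k^st^\alpha)$ and using Lemma~\ref{lemMit1}(2) with $\beta=1$, $\mu=\gamma$ gives
\[
{}_{0}\partial_t^\gamma E_{\alpha,1}(-\lambda_k^s t^\alpha)=t^{-\gamma}E_{\alpha,1-\gamma}(-\lambda_k^s t^\alpha),
\]
so that, since $-\lambda_k^st^\alpha$ is real negative (hence $\arg=\pi$ lies in the admissible sector), the decay bound yields
\[
\|{}_{0}\partial_t^\gamma \lambda_k^\sigma E_{\alpha,1}(-\lambda_k^s t^\alpha)\|_{L^2((0,T))}^2
\le C\,\lambda_k^{2\sigma}\int_0^T \frac{t^{-2\gamma}}{(1+\lambda_k^s t^\alpha)^2}\,dt .
\]

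Next I would perform the substitution $u=\lambda_k^{s/\alpha}t$, under which $\lambda_k^st^\alpha=u^\alpha$, to extract the power of $\lambda_k$ explicitly:
\[
\|{}_{0}\partial_t^\gamma \lambda_k^\sigma E_{\alpha,1}(-\lambda_k^s t^\alpha)\|_{L^2((0,T))}^2
\le C\,\lambda_k^{\,2\sigma+\frac{s(2\gamma-1)}{\alpha}}\int_0^{\lambda_k^{s/\alpha}T}\frac{u^{-2\gamma}}{(1+u^\alpha)^2}\,du .
\]
The design of the hypothesis becomes transparent here: $2\gamma<1$ is exactly what makes the integrand integrable at $u=0$, and an elementary rearrangement shows that $\gamma+\frac{\alpha\sigma}{s}<\frac12$ is equivalent to $2\sigma+\frac{s(2\gamma-1)}{\alpha}<0$, i.e.\ the prefactor exponent is strictly negative. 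Consequently, if the truncated $u$-integral stayed uniformly bounded, the whole expression would be $\le C\,\lambda_k^{(\text{negative})}\le C\lambda_1^{(\text{negative})}$, and we would be done.

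The main obstacle is precisely that the upper limit $\lambda_k^{s/\alpha}T$ tends to infinity as $k$ grows, so one must control the large-$u$ (equivalently large-$t$) tail. I would handle this by a case distinction on the integrability of $u^{-2\gamma}(1+u^\alpha)^{-2}\sim u^{-2\gamma-2\alpha}$ at infinity. When $2\gamma+2\alpha>1$ the tail is integrable, so $\int_0^{\lambda_k^{s/\alpha}T}\le\int_0^\infty<\infty$ uniformly in $k$ and the negative prefactor closes the estimate immediately. The delicate regime is $2\gamma+2\alpha\le1$, where the truncated integral grows like a positive power of $\lambda_k^{s/\alpha}T$; there I would instead split the original $t$-integral at $t^*=\lambda_k^{-s/\alpha}$ (the crossover where $\lambda_k^st^\alpha=1$), bounding $(1+\lambda_k^st^\alpha)\ge1$ on $(0,t^*)$ and $(1+\lambda_k^st^\alpha)\ge\lambda_k^st^\alpha$ on $(t^*,T)$, and then use the finiteness of the horizon $T$ to keep the tail integral $\int_{t^*}^T t^{-2\gamma-2\alpha}\,dt$ under control while rebalancing the accumulated powers of $\lambda_k$ against the negative prefactor via the hypothesis. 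Verifying that these two contributions recombine into a nonpositive net power of $\lambda_k$ is the crux of the argument and the step I expect to demand the most care.
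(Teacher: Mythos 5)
Your opening moves are exactly those of the paper's proof: Lemma~\ref{lemMit1}(2) to get ${}_{0}\partial_t^\gamma E_{\alpha,1}(-\lambda_k^st^\alpha)=t^{-\gamma}E_{\alpha,1-\gamma}(-\lambda_k^st^\alpha)$, then the decay bound $|E_{\alpha,\beta}(z)|\le C(1+|z|)^{-1}$. After that the paper does not rescale; it writes
\begin{equation*}
\frac{\lambda_k^{2\sigma}t^{-2\gamma}}{(1+\lambda_k^st^\alpha)^2}
= t^{-2\gamma-\frac{2\alpha\sigma}{s}}\left(\frac{(\lambda_k^st^\alpha)^{\sigma/s}}{1+\lambda_k^st^\alpha}\right)^{2}
\le C\,t^{-2\gamma-\frac{2\alpha\sigma}{s}},
\end{equation*}
and concludes in one line from $2\gamma+2\alpha\sigma/s<1$. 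The genuine gap in your proposal is the step you yourself flag as the crux: the regime $2\gamma+2\alpha\le 1$. Carrying out your own plan, the piece on $(0,t^*)$ with $t^*=\lambda_k^{-s/\alpha}$ gives $C\lambda_k^{2\sigma+s(2\gamma-1)/\alpha}\le C$ by the hypothesis, but the tail piece gives $\lambda_k^{2\sigma-2s}\int_{t^*}^{T}t^{-2\gamma-2\alpha}dt\le C\,T^{1-2\gamma-2\alpha}\lambda_k^{2\sigma-2s}$, and this is sharp (on $(t^*,T)$ one has $1+\lambda_k^st^\alpha\le 2\lambda_k^st^\alpha$). The net power $2\sigma-2s$ is nonpositive only if $\sigma\le s$, which does \emph{not} follow from $\gamma+\alpha\sigma/s<1/2$ with $\sigma\in[0,1]$: take $\alpha=0.1$, $s=0.2$, $\sigma=0.8$, $\gamma=0$. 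Worse, for such parameters no argument can close the gap, because the lemma as literally stated is then false: for real $x\ge0$ one also has the lower bound $E_{\alpha,1}(-x)\ge c(1+x)^{-1}$, so the squared norm genuinely grows like $\lambda_k^{2\sigma-2s}\to\infty$.

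The fair comparison with the paper is this: the paper's pointwise bound displayed above is legitimate only when $\sigma/s\le 1$ (it is precisely the device of Lemma~\ref{lemMit2}, whose hypothesis is $\sigma\in[0,s]$), so the paper's proof silently assumes $\sigma\le s$ as well; the stated range $\sigma\in[0,1]$ is an oversight in the lemma, and in the paper's applications one indeed has $\sigma\le s$. Once the hypothesis is corrected to $\sigma\in[0,s]$, your argument closes immediately --- the tail power $2\sigma-2s$ becomes nonpositive --- and your easy case $2\gamma+2\alpha>1$ was already complete (and, as a small bonus, valid even for $\sigma>s$, which the paper's one-line bound cannot handle). So the verdict is: same first half as the paper, a different and perfectly viable second half, but the rebalancing step you deferred is exactly where the missing constraint $\sigma\le s$ is needed, and without it the claim fails.
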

\begin{proof}
	Lemma \ref{lemMit1} and the fact that $|E_{\alpha,\beta}(z)|\leq C(1+|z|)^{-1}$ for $\arg(z)\in[\nu,\pi]$ with $\nu\in(\frac{\alpha\pi}{2},\min(\pi,\alpha\pi))$ \cite{Kilbas.2006TaAoFDE} imply that
	\begin{equation*}
		\begin{aligned}
			&\|{}_{0}\partial^{\gamma}_{t}\lambda^{\sigma}_{k}E_{\alpha,1}(-\lambda_{k}^{s}t^{\alpha})\|_{L^{2}((0,T))}^{2}=\int_{0}^{T}\left({}_{0}\partial^{\gamma}_{t}\lambda^{\sigma}_{k}E_{\alpha,1}(-\lambda_{k}^{s}t^{\alpha})\right)^{2}dt\\
			&\quad =\int_{0}^{T}\left(\lambda^{\sigma}_{k}t^{-\gamma}E_{\alpha,1-\gamma}(-\lambda_{k}^{s}t^{\alpha})\right)^{2}dt
			\leq C\int_{0}^{T}\left (\frac{\lambda^{\sigma}_{k}t^{-\gamma}}{1+\lambda_{k}^{s}t^{\alpha}}\right )^{2}dt\\
			&\quad\leq C\int_{0}^{T}t^{-\frac{2\alpha\sigma}{s}-2\gamma}\left (\frac{(\lambda^{s}_{k}t^{\alpha})^{\frac{\sigma}{s}}}{1+\lambda_{k}^{s}t^{\alpha}}\right )^{2}dt\leq C\int_{0}^{T}t^{-\frac{2\alpha\sigma}{s}-2\gamma}dt,
		\end{aligned}
	\end{equation*}
	where $C$ is a positive constant independent of $k$. To preserve the boundedness of $\|{}_{0}\partial^{\gamma}_{t}\lambda^{\sigma}_{k}E_{k}(t)\|_{L^{2}((0,T))}^{2}$, we need to require $-2\gamma-\frac{2\alpha\sigma}{s}>-1$, i.e., $\gamma<\frac{1}{2}-\frac{\alpha\sigma}{s}$. Hence we complete the proof.
\end{proof}

Now we provide the convergence of Wong--Zakai approximation.
\begin{theorem}\label{theoremregularizaed1}
	Let $u$ and $u_{R}$ be the solutions of \eqref{eqretosol} and \eqref{eqreglar}, respectively. Let $f(u)$ satisfy Assumption \ref{eqnonassump}, $\frac{2sH_{2}}{\alpha}+H_{1}-1>0$, and $H_{1}+2s-1>0$. Then the following estimate  hold
	\begin{equation*}
		\mathbb{E}\|u(t)-u_{R}(t)\|^{2}_{L^{2}(D)}\leq C(N+1)^{-4\sigma}+CN^{1-2H_{2}}\tau^{2H_{2}-\frac{\alpha}{2s}-\epsilon},
	\end{equation*}
	where $2\sigma\in \left(0,\min\{\frac{2sH_{2}}{\alpha}+H_{1}-1,H_{1}+2s-1\}\right)$.
\end{theorem}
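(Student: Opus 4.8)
The plan is to subtract the mild representations \eqref{equsolrep} and \eqref{eqregularsol} and to separate the discrepancy in the drift from the discrepancy in the stochastic convolution, closing with Gr\"onwall. Writing $u-u_R=\mathcal{E}_f+\mathcal{E}_\xi$ with $\mathcal{E}_f=\int_0^t\mathcal{S}(t-r)(f(u)-f(u_R))\,dr$, Lemma \ref{lemMit2} with $\sigma=0$ and $\beta=1$ gives the uniform bound $\|\mathcal{S}(t-r)v\|_{L^2(D)}\le C\|v\|_{L^2(D)}$, so the Cauchy--Schwarz inequality in time together with the Lipschitz condition in Assumption \ref{eqnonassump} yields $\mathbb{E}\|\mathcal{E}_f\|_{L^2(D)}^2\le C\int_0^t\mathbb{E}\|u(r)-u_R(r)\|_{L^2(D)}^2\,dr$. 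This term carries no rate and will be absorbed at the end by the Gr\"onwall inequality, so the entire convergence order lives in $\mathcal{E}_\xi$.

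Since $G_{R,N}$ performs two approximations at once---truncation to the first $N$ eigenmodes and replacement of each kernel $E_{\alpha,1}(-\lambda_k^s(t-r)^\alpha)$ by its piecewise-constant time average $\mathcal{S}_{R,k}(t,r)$---I would split $\mathcal{E}_\xi$ into a spatial-truncation part carried by the modes $k>N$ and a temporal-regularization part carried by $k\le N$. For the truncation part Theorem \ref{thmisometry} reduces the second moment to $\sum_{k>N}\|{}_0\partial_r^{\frac{1-2H_2}{2}}E_{\alpha,1}(-\lambda_k^s(t-r)^\alpha)\|_{L^2(0,T)}^2\|\phi_k\|_{H_0^{(1-2H_1)/2}(D)}^2$, which is exactly the sum already controlled in the proof of Theorem \ref{thmsoblevu} once the weight $\lambda_k^{2\sigma}$ is inserted (use Lemma \ref{lemMit1}(2) to move the fractional derivative onto the second index, then Lemma \ref{lemMit2}). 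Factoring $\lambda_{N+1}^{-2\sigma}$ out of the unweighted tail and invoking $\lambda_k\ge Ck^2$ gives the first term $C(N+1)^{-4\sigma}$.

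For the temporal part I would again apply Theorem \ref{thmisometry}, reducing it to $\sum_{k=1}^N\|{}_0\partial_r^{\frac{1-2H_2}{2}}(E_{\alpha,1}(-\lambda_k^s(t-r)^\alpha)\chi_{(0,t)}-\mathcal{S}_{R,k}(t,r))\|_{L^2(0,T)}^2\|\phi_k\|_{H_0^{(1-2H_1)/2}(D)}^2$, and then estimate the fractional-order norm of the piecewise-constant projection error one mode at a time. Mirroring the difference estimates in the proof of Theorem \ref{thmholderu}---expressing the error on each subinterval through the time integral of $-\lambda_k^s\eta^{\alpha-1}E_{\alpha,\alpha}(-\lambda_k^s\eta^\alpha)$ and controlling it with Lemma \ref{lemMit1}(2), Lemma \ref{lemMit2}, and the uniform bound of Lemma \ref{lemEkest}---I expect each mode to contribute the factor $\tau^{2H_2-\frac{\alpha}{2s}-\epsilon}$, i.e. the squared time-H\"older exponent of Theorem \ref{thmholderu} at the endpoint $H_1=\frac{1}{2}$, together with a fixed power of $\lambda_k$. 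Summing that power against $\|\phi_k\|_{H_0^{(1-2H_1)/2}(D)}^2\sim\lambda_k^{(1-2H_1)/2}$ over $k\le N$, once more using $\lambda_k\ge Ck^2$, produces the prefactor $N^{1-2H_2}$ and hence the second term.

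The main obstacle is this temporal estimate. The kernel $E_{\alpha,1}(-\lambda_k^s(t-r)^\alpha)$ carries both a singularity as $r\to t$ and a $k$-dependent time scale $\lambda_k^{-1/s}$, so the piecewise-constant projection error must be tracked in the \emph{fractional} norm $H_0^{(1-2H_2)/2}$ with the exact joint powers of $\tau$ and $\lambda_k$; if either power is off, the $k$-summation either diverges or loses the sharp exponent $1-2H_2$. Obtaining a uniform-in-$k$ handle on the fractional derivative of the projection error---rather than on the error itself, which is all Lemma \ref{lemEkest} delivers directly---is the technical heart of the argument, and the hypotheses $\frac{2sH_2}{\alpha}+H_1-1>0$ and $H_1+2s-1>0$ are precisely what keep both the time integrals and the $\lambda_k$-sums convergent.
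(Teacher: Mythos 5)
Your route is the paper's route: the same splitting of $u-u_R$ into a drift error plus a stochastic-convolution error via \eqref{equsolrep} and \eqref{eqregularsol}, the drift handled by the uniform bound on $\mathcal{S}$ and absorbed by Gr\"onwall, and the noise term split into the spectral tail $k>N$ and the time-averaging error on the modes $k\le N$. Your tail estimate (factor $\lambda_{N+1}^{-2\sigma}$ out of the weighted sum from the proof of Theorem \ref{thmsoblevu}, under exactly the stated restrictions on $2\sigma$) reproduces the paper's treatment verbatim, and your per-mode temporal rate $\tau^{2H_2-\frac{\alpha}{2s}-\epsilon}$ is the correct one.

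The gap is in the final bookkeeping of the temporal part. You claim that summing ``a fixed power of $\lambda_k$'' against $\|\phi_k\|^2_{H_0^{(1-2H_1)/2}(D)}\sim\lambda_k^{(1-2H_1)/2}$ over $k\le N$ produces the prefactor $N^{1-2H_2}$. It cannot: the $k$-sum only sees powers of $\lambda_k\sim k^2$, while $H_2$ enters the estimate solely through the $k$-independent order $\frac{1-2H_2}{2}$ of the time-fractional derivative, so no $H_2$-dependent exponent of $N$ can emerge from that summation. What the computation actually delivers is $N^{1-2H_1}\tau^{2H_2-\frac{\alpha}{2s}-\epsilon}$: one bounds $\|\phi_k\|^2_{H_0^{(1-2H_1)/2}(D)}\le C\lambda_k^{(1-2H_1)/2}\le CN^{1-2H_1}$ for $k\le N$, inserts $1=\lambda_k^{-\frac{1}{2}-2\epsilon}\lambda_k^{\frac{1}{2}+2\epsilon}$ so that $\sum_k\lambda_k^{-\frac{1}{2}-2\epsilon}$ converges, and pairs $\lambda_k^{\frac{1}{4}+\epsilon}$ with the Mittag--Leffler projection error inside the fractional norm, where Lemma \ref{lemEkest}, Remark \ref{Respace}, and the interpolation theorem give a $k$-uniform bound $C\tau^{2H_2-\frac{\alpha}{2s}-\epsilon}$. (The paper's own proof indeed ends with $N^{1-2H_1}$, which conflicts with the $N^{1-2H_2}$ in the theorem statement; this looks like a typo in the paper, but deriving the $H_2$-exponent from the spatial sum is not a legitimate way to bridge that discrepancy.) Relatedly, the $k$-uniform estimate of ${}_{0}\partial^{\frac{1-2H_2}{2}}_{r}$ applied to the piecewise-constant projection error --- which you rightly identify as the technical heart --- cannot be obtained subinterval-by-subinterval in the style of Theorem \ref{thmholderu}, because the Riemann--Liouville derivative is nonlocal; the paper obtains it through the norm equivalence of Remark \ref{Respace} and interpolation between integer-order estimates, with Lemma \ref{lemEkest} supplying the uniform-in-$k$ regularity. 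As written, your proposal asserts rather than establishes the second term of the bound.
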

\begin{proof}
	From \eqref{equsolrep} and \eqref{eqregularsol}, we have
	\begin{equation*}
		\begin{aligned}
			&\mathbb{E}\|u(t)-u_{R}(t)\|^{2}_{L^{2}(D)}\\
			\leq&C\mathbb{E}\left \|\int_{0}^{t}\mathcal{S}(t-r)(f(u)-f(u_{R}))dr\right \|^{2}_{L^{2}(D)}\\
			&+C\mathbb{E}\left\|\int_{0}^{T}\int_{D}(G(t-r,x,y)\chi_{(0,t)}(t-r)-G_{R,N}(t,r,x,y))\xi^{H_{1},H_{2}}(y,r)dydr\right \|^{2}_{L^{2}(D)}\\
			\leq&\uppercase\expandafter{\romannumeral1}+\uppercase\expandafter{\romannumeral2}.
		\end{aligned}
	\end{equation*}
According to Assumption \ref{eqnonassump} and Lemma \ref{lemMit2}, there exists 
	\begin{equation*}
		\begin{aligned}
			\uppercase\expandafter{\romannumeral1}\leq &C\mathbb{E}\int_{0}^{t}\sum_{k=1}^{\infty}(E_{\alpha,1}(-\lambda_{k}^{s}(t-r)^{\alpha}))^{2}((f(u)-f(u_{R})),\phi_{k})^{2}ds\\
			\leq &C\int_{0}^{t}\mathbb{E}\|f(u)-f(u_{R})\|^{2}_{L^{2}(D)}dr\leq C\int_{0}^{t}\mathbb{E}\|u-u_{R}\|^{2}_{L^{2}(D)}dr.
		\end{aligned}
	\end{equation*}
	Then we can split $\uppercase\expandafter{\romannumeral2}$ into two parts:
	\begin{equation*}
		\begin{aligned}
			\uppercase\expandafter{\romannumeral2}\leq &C\mathbb{E}\sum_{k=1}^{N}\left\|\int_{0}^{T}\int_{D}(G_{k}(t-r,x,y)\chi_{(0,t)}(t-r)-G_{R,k}(t,r,x,y))\xi^{H_{1},H_{2}}(y,r)dydr\right \|^{2}_{L^{2}(D)}\\
			&+C\mathbb{E}\sum_{k=N+1}^{\infty}\left\|\int_{0}^{t}\int_{D}G_{k}(t-r,x,y)\xi^{H_{1},H_{2}}(y,r)dydr\right \|^{2}_{L^{2}(D)}\\
			\leq& \uppercase\expandafter{\romannumeral2}_{1}+\uppercase\expandafter{\romannumeral2}_{2}.
		\end{aligned}
	\end{equation*}
	Combining Lemma \ref{lemEkest}, Remark \ref{Respace}, the interpolation theorem \cite{Brenner.2008TMToFEM}, and $H_{2}\in(0,\frac{1}{2}]$, one has
	\begin{equation*}
		\begin{aligned}
			\uppercase\expandafter{\romannumeral2}_{1}\leq&C\sum_{k=1}^{N}\left\|{}_{0}\partial^{\frac{1-2H_{2}}{2}}_{r}(E_{\alpha,1}(-\lambda_{k}^{s}(t-r)^{\alpha})\chi_{(0,t)}(t-r)-\mathcal{S}_{R,k}(t,r)) \right \|^{2}_{L^{2}((0,T))}\left\|
			\phi_{k}(y)\right\|^{2}_{H^{\frac{1-2H_{1}}{2}}_{0}(D)}\\
			\leq&CN^{1-2H_{1}}\sum_{k=1}^{N}\lambda_{k}^{-\frac{1}{2}-2\epsilon}\left\|{}_{0}\partial^{\frac{1-2H_{2}}{2}}_{r}\lambda_{k}^{\frac{1}{4}+\epsilon}\left(E_{\alpha,1}(-\lambda_{k}^{s}(t-r)^{\alpha})\chi_{(0,t)}(t-r)-\mathcal{S}_{R,k}(t,r)\right)\right\|^{2}_{L^{2}((0,T))}\\
			\leq&CN^{1-2H_{1}}\tau^{2H_{2}-\frac{\alpha}{2s}-\epsilon}.
		\end{aligned}
	\end{equation*}
According to the property of eigenfunction $\phi_{k}$,  Remark \ref{Respace}, and Lemma \ref{lemMit2}, there exists
	\begin{equation*}
		\begin{aligned}
			\uppercase\expandafter{\romannumeral2}_{2}\leq& C\int_{0}^{t}\sum_{k=N+1}^{\infty}\left|{}_{0}\partial^{\frac{1-2H_{2}}{2}}_{r}E_{\alpha,1}(-\lambda_{k}^{s}r^{\alpha})\right|^{2}\|\phi_{k}(y)\|^{2}_{H^{\frac{1-2H_{1}}{2}}_{0}(D)}\|\phi_{k}(x)\|^{2}_{L^{2}(D)}dr\\
			\leq& C\lambda^{-2\sigma}_{N+1}\int_{0}^{t}\sum_{k=N+1}^{\infty}\left|\lambda_{k}^{\sigma+\frac{1-H_{1}}{2}+\epsilon}r^{-\frac{1-2H_{2}}{2}}E_{\alpha,1-\frac{1-2H_{2}}{2}}(-\lambda_{k}^{s}r^{\alpha})\right|^{2}\left(\lambda_{k}^{-\frac{1}{4}-\epsilon}\phi_{k}(x),\phi_{k}(x)\right)^{2}dr\\
			\leq& C\lambda^{-2\sigma}_{N+1}\int_{0}^{t}r^{2-1+2H_{2}-2-2(\sigma+\frac{1-H_{1}}{2}+\epsilon)\alpha/s}dr,
		\end{aligned}
	\end{equation*}
under the assumption $2H_{2}-2(\sigma+\frac{1-H_{1}}{2}+\epsilon)\alpha/s>0$ and $\sigma+\frac{1-H_{1}}{2}<s$, i.e., $2\sigma<\min\{\frac{2sH_{2}}{\alpha}+H_{1}-1,H_{1}+2s-1\}$.
Therefore, after gathering the above estimates, the desired result is reached.
\end{proof}

\section{The  Fully Discrete Scheme and Error Analyses}

In this section, we first use the spectral Galerkin method and Mittag--Leffler Euler integrator \cite{Dai.2022WaMEifsfSwfian,Kovacs.2020MEifasfoewan} to build the fully discrete scheme of \eqref{eqreglar}, and then speed up our algorithm with the help of contour integral provided in \cite{Stenger.1981NmboWcosf}.  At last, the strict error analyses are presented.
\subsection{The Fully Discrete Scheme}
Here, we introduce some notations first. Let $\mathbb{H}_{N}={\rm span}\{\phi_{1},\phi_{2},\ldots,\phi_{N}\}$ with $N\in\mathbb{N}^{*}$ be a finite dimensional subspace of $L^{2}(D)$ and $P_{N}:~L^{2}(D)\rightarrow \mathbb{H}_{N}$ the projection satisfying
\begin{equation*}
	(u,v_{N})=(P_{N}u,v_{N})\quad \forall v_{N}\in \mathbb{H}_{N}.
\end{equation*}
It is easy to verify that
\begin{equation*}
	P_{N}u=\sum_{j=1}^{N}(u,\phi_{j})\phi_{j}.
\end{equation*}
Introduce the discrete operator $A_{N}^{s}:\mathbb{H}_{N}\rightarrow\mathbb{H}_{N}$ as
\begin{equation*}
	(A^{s}_{N}u_{N},v_{N})=(A^{s}u_{N},v_{N})\quad \forall u_{N},v_{N}\in \mathbb{H}_{N}.
\end{equation*}
Then the spectral Galerkin scheme for \eqref{eqreglar} can be written as: find $u_{N}(t)\in \mathbb{H}_{N}$ satisfying
\begin{equation}\label{eqsemisch}
	\left\{
	\begin{aligned}
		&\partial_{t}u_{N}+{}_{0}\partial^{1-\alpha}_{t}A^{s}_{N}u_{N}=P_{N}f(u_{N})+\xi^{H_{1},H_{2}}_{R}(x,t)\qquad t\in(0,T],\\
		&u_{N}(0)=0.
	\end{aligned}
	\right.
\end{equation}
To show the representation of the solution of \eqref{eqsemisch}, we introduce
\begin{equation}\label{eqdefERN}
	\mathcal{S}_{R,N}(t)u=\sum_{k=1}^{N}E_{\alpha,1}(-\lambda_{k}^{s}t^{\alpha})(u,\phi_{k})\phi_{k}.
\end{equation}
Thus the solution of \eqref{eqsemisch} has the form
\begin{equation}\label{eqsemischsol}
	u_{N}(x,t)=\int_{0}^{t}\mathcal{S}_{R,N}(t-r)P_{N}f(u_{N})dr+\int_{0}^{T}\int_{D}G_{R,N}(t,r,x,y)\xi^{H_{1},H_{2}}(y,r)dydr.
\end{equation}
To obtain a fully discrete scheme, we approximate $f(u_{N})$ by piecewise constant function in \eqref{eqsemischsol}, which leads to the Mittag--Leffler Euler integrator, i.e.,
\begin{equation}\label{equfullscheme}
	\begin{aligned}
		\bar{u}^{n}_{N}
		=&\sum_{i=1}^{n}\int_{t_{i-1}}^{t_{i}}\mathcal{S}_{R,N}(t_{n}-r)P_{N}f(\bar{u}_{N}^{i-1})dr\\
		&+\sum_{i=1}^{n}\int_{t_{i-1}}^{t_{i}}\int_{D}G_{R,N}(t_{n},r,x,y)\xi^{H_{1},H_{2}}(y,r)dydr\\
		=&\sum_{i=1}^{n}\int_{t_{i-1}}^{t_{i}}\mathcal{S}_{R,N}(t_{n}-r)P_{N}f(\bar{u}_{N}^{i-1})dr\\
		&+\sum_{i=1}^{n}\int_{t_{i-1}}^{t_{i}}\int_{D}G_{R,S}(t_{n}-r,x,y)\xi_{R}^{H_{1},H_{2}}(y,r)dydr,
	\end{aligned}
\end{equation}
where $\bar{u}^{n}_{N}$ is the numerical solution at $t_{n}$, $\bar{u}^{0}_{N}=P_{N}u_{0}$, and
\begin{equation*}
	G_{R,S}(t,x,y)=\sum_{k=1}^{N}E_{\alpha,1}(-\lambda_{k}^{s}t^{\alpha})\phi_{k}(x)\phi_{k}(y).
\end{equation*}
\subsection{Fast Mittag--Leffler Euler integrator}
 
 Since the Mittag--Leffler function is composed of an infinite series,  a large amount of computations is needed to preserve the accuracy when simulating $\bar{u}^{n}_{N}$ $(n=1,2,\ldots,M)$ according to \eqref{equfullscheme}. On the other hand, the solution operators $\mathcal{S}_{R,N}$ and $G_{R,S}$ do not have semi-group property, which leads to an $\mathcal{O}(M^{2})$ computation complexity. So  an accurate and fast  Mittag--Leffler Euler integrator is provided by using the contour integral.

 Using the Laplace transform of Mittag--Leffler function and convolution property, one  obtains
\begin{equation}\label{mittagsolution}
	\begin{aligned}
		\bar{u}_{N}^{n}
		=&\sum_{i=1}^{n}\sum_{k=1}^{N}\frac{1}{2\pi\mathbf{i}}\int_{t_{i-1}}^{t_{i}}\int_{\Re(z)=\vartheta}e^{z(t_{n}-r)}z^{\alpha-1}(z^{\alpha}+\lambda_{k}^{s})^{-1}dz(P_{N}f(\bar{u}_{N}^{i-1}),\phi_{k})\phi_{k}dr\\
		&+\sum_{i=1}^{n}\sum_{k=1}^{N}\frac{1}{2\pi\mathbf{i}}\int_{t_{i-1}}^{t_{i}}\int_{\Re(z)=\vartheta}e^{z(t_{n}-r)}z^{\alpha-1}(z^{\alpha}+\lambda_{k}^{s})^{-1}dz(\xi^{H_{1},H_{2}}_{R}(x,r),\phi_{k})\phi_{k}dr,
	\end{aligned}
\end{equation}
where $\vartheta$ is a positive constant, $\mathbf{i}$ is the imaginary unit, and $\Re(z)$ means the real part of $z$.

  From \eqref{mittagsolution}, it's easy to find  that the integrand functions are analytic for $z\in\Sigma_{\theta}=\{z\in \mathbb{C}, |z|>0, \arg(z)<\theta\}$ with $\theta\in (\frac{\pi}{2},\pi)$, so one can deform the contour $\Re(z)=\vartheta$  to $\Gamma_{\mu}$, i.e.,
\begin{equation*}
	\Gamma_{\mu}=\{\mu(1-\sin(\nu+\mathbf{i}r)):r\in\mathbb{R}\}
\end{equation*}
with $\nu\in(0,\theta-\frac{\pi}{2})$ and $\mu$ being a parameter to be determined in the following. Thus $\bar{u}^{n}_{N}$ can be rewritten as
\begin{equation*}
	\begin{aligned}
		\bar{u}_{N}^{n}
		=&\sum_{i=1}^{n}\sum_{k=1}^{N}\frac{1}{2\pi\mathbf{i}}\int_{t_{i-1}}^{t_{i}}\int_{\Gamma_{\mu}}e^{z(t_{n}-r)}z^{\alpha-1}(z^{\alpha}+\lambda_{k}^{s})^{-1}dz(P_{N}f(\bar{u}_{N}^{i-1}),\phi_{k})\phi_{k}dr\\
		&+\sum_{i=1}^{n}\sum_{k=1}^{N}\frac{1}{2\pi\mathbf{i}}\int_{t_{i-1}}^{t_{i}}\int_{\Gamma_{\mu}}e^{z(t_{n}-r)}z^{\alpha-1}(z^{\alpha}+\lambda_{k}^{s})^{-1}dz(\xi^{H_{1},H_{2}}_{R}(x,r),\phi_{k})\phi_{k}dr.
	\end{aligned}
\end{equation*}
After exchanging the order of the integration, it yields that
\begin{equation}\label{eqfullsch00}
	\begin{aligned}
		\bar{u}_{N}^{n}
		=&\sum_{i=1}^{n}\sum_{k=1}^{N}\frac{1}{2\pi\mathbf{i}}\int_{\Gamma_{\mu}}(e^{zt_{n-i+1}}-e^{zt_{n-i}})z^{\alpha-2}(z^{\alpha}+\lambda_{k}^{s})^{-1}dz(P_{N}f(\bar{u}_{N}^{i-1}),\phi_{k})\phi_{k}\\
		&+\sum_{i=1}^{n}\sum_{k=1}^{N}\frac{1}{2\pi\mathbf{i}}\int_{\Gamma_{\mu}}(e^{zt_{n-i+1}}-e^{zt_{n-i}})z^{\alpha-2}(z^{\alpha}+\lambda_{k}^{s})^{-1}dz(\xi^{H_{1},H_{2}}_{R}(x,r),\phi_{k})\phi_{k}.
	\end{aligned}
\end{equation}
To simulate $\bar{u}_{N}^{n}$ effectively, we need to provide an efficient algorithm to calculate $\frac{1}{2\pi\mathbf{i}}\int_{\Gamma_{\mu}}e^{zt}z^{\alpha-2}(z^{\alpha}+\lambda_{k}^{s})^{-1}dz$. According to \cite{Stenger.1981NmboWcosf}, we have the approximation
\begin{equation}\label{eqdefErrork}
	\frac{1}{2\pi\mathbf{i}}\int_{\Gamma_{\mu}}e^{zt}z^{\alpha-2}(z^{\alpha}+\lambda_{k}^{s})^{-1}dz=\sum_{j=-L}^{L}\omega_{j}e^{z_{j}t}z_{j}^{\alpha-2}(z_{j}^{\alpha}+\lambda_{k}^{s})^{-1}+\mathcal{E}_{k}(t)
\end{equation}
with
\begin{equation*}
	\omega_{j}=-\frac{\bar{h}}{2\pi\mathbf{i}}\rho^{\prime}(j\bar{h}), \quad z_{h}=\rho(j\bar{h}),\quad \bar{h}=\sqrt{\frac{2\pi q}{ L}},
\end{equation*}
and
\begin{equation*}
	\rho(r)=\mu(1-\sin(\nu+\mathbf{i}r)).
\end{equation*}
Here $\rho'$ means the derivative of $\rho$. Substituting \eqref{eqdefErrork} into \eqref{eqfullsch00} leads to the fast fully discrete scheme
\begin{equation}\label{equfullschemefast}
	\begin{aligned}
		u_{N}^{n}=&\sum_{i=1}^{n}\sum_{k=1}^{N}\Bigg(\sum_{j=-L}^{L}\omega_{j}(e^{z_{j}t_{n-i+1}}-e^{z_{j}t_{n-i}})z_{j}^{\alpha-2}(z_{j}^{\alpha}+\lambda_{k}^{s})^{-1}\Bigg)(P_{N}f(u^{i-1}_{N}),\phi_{k})\phi_{k}\\
		&+\sum_{i=1}^{n}\sum_{k=1}^{N}\Bigg(\sum_{j=-L}^{L}\omega_{j}(e^{z_{j}t_{n-i+1}}-e^{z_{j}t_{n-i}})z_{j}^{\alpha-2}(z_{j}^{\alpha}+\lambda_{k}^{s})^{-1}\Bigg)(\xi^{H_{1},H_{2}}_{R}(x,t_{i}),\phi_{k})\phi_{k}\\
		=&\sum_{j=-L}^{L}\sum_{k=1}^{N}\Bigg(e^{z_{j}\tau}\mathcal{G}_{1,k,j}^{n-1}+\omega_{j}(e^{z_{j}\tau}-1)z_{j}^{\alpha-2}(z_{j}^{\alpha}+\lambda_{k}^{s})^{-1}(P_{N}f(u^{n-1}_{N}),\phi_{k})\phi_{k}\Bigg)\\
		&+\sum_{j=-L}^{L}\sum_{k=1}^{N}\Bigg(e^{z_{j}\tau}\mathcal{G}_{2,k,j}^{n-1}+\omega_{j}(e^{z_{j}\tau}-1)z_{j}^{\alpha-2}(z_{j}^{\alpha}+\lambda_{k}^{s})^{-1}(\xi^{H_{1},H_{2}}_{R}(x,t_{n}),\phi_{k})\phi_{k}\Bigg),
	\end{aligned}
\end{equation}
where the history terms $\mathcal{G}_{1,k,j}^{n}$ and $\mathcal{G}_{2,k,j}^{n}$ are defined by
\begin{equation}\label{eqdefhis}
	\begin{aligned}
		\mathcal{G}_{1,k,j}^{n}=&\sum_{i=0}^{n-1}\omega_{j}(e^{z_{j}t_{n-i+1}}-e^{z_{j}t_{n-i}})z_{j}^{\alpha-2}(z_{j}^{\alpha}+\lambda_{k}^{s})^{-1}(P_{N}f(u^{i-1}_{N}),\phi_{k})\phi_{k},\\
		\mathcal{G}_{2,k,j}^{n}=&\sum_{i=0}^{n-1}\omega_{j}(e^{z_{j}t_{n-i+1}}-e^{z_{j}t_{n-i}})z_{j}^{\alpha-2}(z_{j}^{\alpha}+\lambda_{k}^{s})^{-1}(\xi^{H_{1},H_{2}}_{R}(x,t_{i}),\phi_{k})\phi_{k}.	
	\end{aligned}
\end{equation}
\begin{remark}
	Compared with the algorithm provided in \cite{Kovacs.2020MEifasfoewan}, we can simulate \eqref{equfullscheme} in Laplace domain with fewer integration points, which avoids the huge computational cost on solving the  Mittag--Leffler function.  On the other hand, we don't need to calculate the sum of $\int_{t_{i-1}}^{t_{i}}\mathcal{S}_{RN}(t-r)P_{N}f(\bar{u}_{N}^{i-1})dr$ and $\int_{t_{i-1}}^{t_{i}}\int_{D}G_{RS}(t-r,x,y)\xi_{R}^{H_{1},H_{2}}(y,r)dydr$ in each iteration, which reduces the computation complexity from $\mathcal{O}(M^{2})$ to $\mathcal{O}(LM)$, where $M$ is the number of time step and $2L+1$ is the number of integration points.
\end{remark}

\subsection{Spatial error analyses}
To propose the convergence in spatial direction, we consider the spectral Galerkin semi-discrete scheme, i.e., find $\bar{u}_{N}\in \mathbb{H}_{N}$ satisfying
\begin{equation}\label{eqsemisch0}
	\left \{\begin{aligned}
		&\partial_{t}\bar{u}_{N}+{}_{0}\partial^{1-\alpha}_{t}A^{s}_{N}\bar{u}_{N}=P_{N}f(\bar{u}_{N})+\xi^{H_{1},H_{2}}_{RS}(x,t)\qquad t\in(0,T],\\
		&\bar{u}_{N}(0)=0,
	\end{aligned}\right .
\end{equation}
where
\begin{equation*}
	\xi^{H_{1},H_{2}}_{RS}(x,t)=\sum_{j=1}^{N}\int_{D}\phi_{j}(y)\xi^{H_{1},H_{2}}(y,r)dy\phi_{j}(x).
\end{equation*}
Thus the solution of \eqref{eqsemisch0} can be written as
\begin{equation}\label{eqsemisch0sol}
	\bar{u}_{N}=\int_{0}^{t}\mathcal{S}_{R,N}(t-r)P_{N}f(\bar{u}_{N})dr+\int_{0}^{t}\int_{D}G_{R,S}(t-r,x,y)\xi_{RS}^{H_{1},H_{2}}(y,r)dydr.
\end{equation}

Similar to the proofs of Theorems \ref{thmsoblevu} and \ref{thmholderu}, we have the following estimates.

\begin{theorem}\label{thmregnumsol}
	Let $\bar{u}_{N}$ be the solution of \eqref{eqsemisch0}. If $f(u)$ satisfies Assumption \ref{eqnonassump}, then there holds
	\begin{equation*}
		\begin{aligned}
			&\mathbb{E}\left \|\bar{u}_{N}(t)\right \|^{2}_{L^{2}(D)}\leq C,\\
			&\mathbb{E}\left \|\frac{\bar{u}_{N}(t)-\bar{u}_{N}(t-\tau)}{\tau^{\gamma}}\right \|^{2}_{L^{2}(D)}\leq C
		\end{aligned}
	\end{equation*}
	with $\gamma\in\left(0,H_{2}+\frac{\alpha(H_{1}-1)}{2s}\right)$.
\end{theorem}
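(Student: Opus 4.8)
The plan is to mirror the arguments of Theorems \ref{thmsoblevu} and \ref{thmholderu}, exploiting the structural observation that the operators $\mathcal{S}_{R,N}$ and $G_{R,S}$ in the representation \eqref{eqsemisch0sol} are exactly the truncations to the first $N$ spectral modes of $\mathcal{S}$ and $G$, and that by orthonormality of the eigenfunctions one has $\int_D\phi_k(y)\xi^{H_1,H_2}_{RS}(y,r)\,dy=\int_D\phi_k(y)\xi^{H_1,H_2}(y,r)\,dy$ for every $k\le N$. Hence every infinite series occurring in the earlier proofs is here replaced by its partial sum up to $N$, so the resulting bounds are dominated by the corresponding convergent infinite sums and are automatically uniform in $N$. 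Throughout, the projection $P_N$ in front of $f$ costs nothing, since $\|P_N f(\bar u_N)\|_{L^2(D)}\le\|f(\bar u_N)\|_{L^2(D)}$.

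For the first (boundedness) estimate I would split \eqref{eqsemisch0sol} as $\mathbb{E}\|\bar u_N(t)\|^2_{L^2(D)}\le\uppercase\expandafter{\romannumeral1}+\uppercase\expandafter{\romannumeral2}$ into its deterministic and stochastic contributions. For $\uppercase\expandafter{\romannumeral1}$, Lemma \ref{lemMit2} with $\sigma=0$, Assumption \ref{eqnonassump}, and the Cauchy--Schwarz inequality reduce the bound to $C(1+\int_0^t\mathbb{E}\|\bar u_N\|^2_{L^2(D)}\,dr)$, exactly as for term $\uppercase\expandafter{\romannumeral1}$ of Theorem \ref{thmsoblevu}. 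For $\uppercase\expandafter{\romannumeral2}$, Parseval's identity in $x$ together with the It\^o isometry of Theorem \ref{thmisometry} gives
\begin{equation*}
\uppercase\expandafter{\romannumeral2}\le C\sum_{k=1}^{N}\int_0^t\big|{}_0\partial^{\frac{1-2H_2}{2}}_r E_{\alpha,1}(-\lambda_k^s r^\alpha)\big|^2\|\phi_k\|^2_{H^{\frac{1-2H_1}{2}}_0(D)}\|\phi_k\|^2_{L^2(D)}\,dr,
\end{equation*}
which is precisely the $\sigma=0$ truncation of term $\uppercase\expandafter{\romannumeral2}$ of Theorem \ref{thmsoblevu}; the same chain of estimates (Lemmas \ref{lemMit1} and \ref{lemMit2}, Remark \ref{Respace}, and $\lambda_k\ge Ck^2$) bounds it by a constant independent of $N$, using the hypotheses $\frac{2sH_2}{\alpha}+H_1-1>0$ and $H_1+2s-1>0$. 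A Gr\"onwall argument then closes this estimate.

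For the H\"older-in-time bound I would repeat the decomposition of Theorem \ref{thmholderu} verbatim, writing the increment as $\uppercase\expandafter{\romannumeral1}=\uppercase\expandafter{\romannumeral1}_1+\uppercase\expandafter{\romannumeral1}_2$ and $\uppercase\expandafter{\romannumeral2}=\uppercase\expandafter{\romannumeral2}_1+\uppercase\expandafter{\romannumeral2}_2$, splitting off the overlap interval $[0,t-\tau]$ (where the increment of $E_{\alpha,1}$ is controlled through its integral representation and Lemma \ref{lemMit1}) from the boundary layer $[t-\tau,t]$. The It\^o isometry, the equivalence of the $H^\mu_0$ norm with the Riemann--Liouville seminorm from Remark \ref{Respace}, and the fractional Poincar\'e inequality reproduce exactly the same powers of $\tau$ as in Theorem \ref{thmholderu}, forcing the admissible range $\gamma\in(0,H_2+\frac{\alpha(H_1-1)}{2s})$.

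The only point requiring genuine care---and the main, though mild, obstacle---is to confirm that all constants are uniform in $N$, since this is exactly what makes the estimate usable in the later fully discrete error analysis. This follows because each truncated sum is dominated by the corresponding convergent infinite series from Theorems \ref{thmsoblevu} and \ref{thmholderu}, whose convergence is guaranteed precisely by the Hurst-parameter conditions implicit in the stated range of $\gamma$ (note that $H_2+\frac{\alpha(H_1-1)}{2s}>0$ is equivalent to $\frac{2sH_2}{\alpha}+H_1-1>0$); the projection $P_N$ and the spatial projection in $\xi^{H_1,H_2}_{RS}$ only decrease the relevant norms and hence contribute no new difficulty.
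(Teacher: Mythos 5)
Your proposal is correct and takes essentially the same route as the paper: the paper's entire proof consists of the remark that the estimates follow "similar to the proofs of Theorems \ref{thmsoblevu} and \ref{thmholderu}," which is exactly the repetition you carry out. Your supporting observations---that $\mathcal{S}_{R,N}$ and $G_{R,S}$ are the $N$-mode spectral truncations of $\mathcal{S}$ and $G$, that the projected noise $\xi^{H_{1},H_{2}}_{RS}$ agrees with $\xi^{H_{1},H_{2}}$ on modes $k\le N$, and that every truncated sum is dominated by the corresponding convergent infinite series so the constants are uniform in $N$---are precisely the details that make the paper's one-line justification rigorous.
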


\begin{theorem}\label{thmspacon}
	Let $u$ and $\bar{u}_{N}$ be the solutions of \eqref{eqretosol} and \eqref{eqsemisch0}, respectively. If Assumption \ref{eqnonassump} is satisfied, then we have
	\begin{equation*}
		\mathbb{E}\|u-\bar{u}_{N}\|^{2}_{L^{2}(D)}\leq C(N+1)^{-4\sigma},
	\end{equation*}
	where $2\sigma<\min\{\frac{2sH_{2}}{\alpha}+H_{1}-1,2H_{1}+2s-1\}$.
\end{theorem}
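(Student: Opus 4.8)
The plan is to start from the mild-solution representations \eqref{equsolrep} for $u$ and \eqref{eqsemisch0sol} for $\bar{u}_{N}$, and to exploit three elementary identities: $\mathcal{S}_{R,N}(t)P_{N}=\mathcal{S}(t)P_{N}$ (both keep only the first $N$ modes), $G_{R,S}$ is exactly the $N$-term truncation of $G$, and, by orthonormality of $\{\phi_{k}\}$, the projected noise $\xi^{H_{1},H_{2}}_{RS}$ reproduces the true modal coefficients $\int_{D}\phi_{k}(y)\xi^{H_{1},H_{2}}(y,r)dy$ for $k\le N$. With these, the difference $u-\bar{u}_{N}$ splits cleanly into a deterministic (nonlinear) contribution and a stochastic contribution, and I would bound $\mathbb{E}\|u-\bar{u}_{N}\|^{2}_{L^{2}(D)}\le \mathrm{I}+\mathrm{II}$.

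For the deterministic term I would write it as $\int_{0}^{t}\mathcal{S}(t-r)\big(f(u)-P_{N}f(\bar{u}_{N})\big)dr$ and decompose $f(u)-P_{N}f(\bar{u}_{N})=(I-P_{N})f(u)+P_{N}\big(f(u)-f(\bar{u}_{N})\big)$. The projection part carries the genuine spatial error: inserting $\lambda_{k}^{-\sigma}\lambda_{k}^{\sigma}$, using $\lambda_{k}^{-\sigma}\le\lambda_{N+1}^{-\sigma}$ for $k\ge N+1$, Lemma \ref{lemMit2} with $\beta=1$, together with $\mathbb{E}\|f(u)\|^{2}_{L^{2}(D)}\le C(1+\mathbb{E}\|u\|^{2}_{L^{2}(D)})\le C$ (the latter from Theorem \ref{thmsoblevu} with $\sigma=0$), gives a bound of order $\lambda_{N+1}^{-2\sigma}\le C(N+1)^{-4\sigma}$ (the restriction $\sigma\le s$ needed here being weaker than the ones imposed below). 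The Lipschitz part is controlled by $C\int_{0}^{t}\mathbb{E}\|u-\bar{u}_{N}\|^{2}_{L^{2}(D)}dr$ using the uniform boundedness of $\mathcal{S}$ and Assumption \ref{eqnonassump}, and is postponed to the final Gr\"{o}nwall step.

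For the stochastic term the two truncated kernels cancel on the first $N$ modes, so $\mathrm{II}$ reduces exactly to the spatial tail $\sum_{k=N+1}^{\infty}$ of the stochastic convolution, i.e.\ the quantity already estimated as $\mathrm{II}_{2}$ in the proof of Theorem \ref{theoremregularizaed1}. I would invoke the It\^{o} isometry of Theorem \ref{thmisometry} (time-reversed form) to pass to $\sum_{k=N+1}^{\infty}\|{}_{0}\partial^{(1-2H_{2})/2}_{r}E_{\alpha,1}(-\lambda_{k}^{s}r^{\alpha})\|^{2}_{L^{2}((0,t))}\|\phi_{k}\|^{2}_{H^{(1-2H_{1})/2}_{0}(D)}$, compute the Riemann--Liouville derivative by Lemma \ref{lemMit1}(2), bound $\|\phi_{k}\|^{2}_{H^{(1-2H_{1})/2}_{0}(D)}\le C\lambda_{k}^{(1-2H_{1})/2}$ via Remark \ref{Respace}, factor out $\lambda_{N+1}^{-2\sigma}$, and apply Lemma \ref{lemMit2}. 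To fit the bare modal sum into the form of Lemma \ref{lemMit2} I would borrow a factor $\lambda_{k}^{-1/4-\epsilon}$ (whose square is summable since $\lambda_{k}\ge Ck^{2}$), which raises the effective smoothing exponent to $\sigma+\tfrac{1-H_{1}}{2}+\epsilon$; integrability at $r=0$ and the requirement that this exponent remain in $[0,s]$ then produce the admissible range of $\sigma$ and the rate $C(N+1)^{-4\sigma}$.

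Collecting $\mathrm{I}$ and $\mathrm{II}$ yields $\mathbb{E}\|u-\bar{u}_{N}\|^{2}_{L^{2}(D)}\le C(N+1)^{-4\sigma}+C\int_{0}^{t}\mathbb{E}\|u-\bar{u}_{N}\|^{2}_{L^{2}(D)}dr$, and the Gr\"{o}nwall inequality closes the estimate. I expect the main obstacle to be the stochastic tail: one must balance the growth $\lambda_{k}^{(1-2H_{1})/2}$ coming from the spatial $H^{(1-2H_{1})/2}_{0}$-regularity of the noise against the temporal singularity and decay of ${}_{0}\partial^{(1-2H_{2})/2}_{r}E_{\alpha,1}(-\lambda_{k}^{s}r^{\alpha})$, and it is precisely this balance that fixes the range of $\sigma$, hence the convergence order, in the statement.
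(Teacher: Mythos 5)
Your proposal is correct and follows essentially the same route as the paper's proof: your splitting $\int_0^t\mathcal{S}(t-r)\left[(I-P_N)f(u)+P_N\big(f(u)-f(\bar{u}_N)\big)\right]dr$ is identical to the paper's decomposition into $(\mathcal{S}-\mathcal{S}_{R,N}P_N)f(u)$ plus $\mathcal{S}_{R,N}P_N(f(u)-f(\bar{u}_N))$, and your handling of the stochastic tail (the isometry of Theorem \ref{thmisometry}, Lemma \ref{lemMit1}(2), Remark \ref{Respace}, borrowing $\lambda_k^{-1/4-\epsilon}$ so that the effective smoothing exponent becomes $\sigma+\frac{1-H_1}{2}+\epsilon$, then Gr\"{o}nwall) is exactly the paper's computation. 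The one caveat is that requiring this effective exponent to lie in $[0,s]$ yields $2\sigma<H_1+2s-1$ rather than the $2\sigma<2H_1+2s-1$ stated in the theorem, but this mismatch is inherited from the paper itself (its displayed estimate uses $\lambda_k^{\sigma+\frac{1-H_1}{2}+\epsilon}$ yet it then asserts the condition $\sigma+\frac{1-2H_1}{2}<s$), so it is a defect of the paper's statement rather than of your argument.
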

\begin{proof}
	According to \eqref{equsolrep} and \eqref{eqsemisch0sol}, we can split  $\mathbb{E}\|u-\bar{u}_{N}\|^{2}_{L^{2}(D)}$ into two parts
	\begin{equation*}
		\begin{aligned}
			&\mathbb{E}\|u-\bar{u}_{N}\|^{2}_{L^{2}(D)}\\
			\leq&C\mathbb{E}\left\|\int_{0}^{t}\mathcal{S}(t-r)f(u)-\mathcal{S}_{R,N}(t-r)P_{N}f(\bar{u}_{N})dr\right\|^{2}_{L^{2}(D)}\\
			&+C\mathbb{E}\Bigg\|\int_{0}^{t}\int_{D}G(t-r,x,y)\xi^{H_{1},H_{2}}(y,r)dydr-\int_{0}^{t}\int_{D}G_{R,S}(t-r,x,y)\xi_{RS}^{H_{1},H_{2}}(y,r)dydr\Bigg \|^{2}_{L^{2}(D)}\\
			\leq&\uppercase\expandafter{\romannumeral1}+\uppercase\expandafter{\romannumeral2}.
		\end{aligned}
	\end{equation*}
	Simple calculations lead to
	\begin{equation*}
		\begin{aligned}
			\uppercase\expandafter{\romannumeral1}\leq& C\mathbb{E}\left\|\int_{0}^{t}(\mathcal{S}(t-r)-\mathcal{S}_{R,N}(t-r)P_{N})f(u)dr\right \|^{2}_{L^{2}(D)}\\
			&+C\mathbb{E}\left\|\int_{0}^{t}\mathcal{S}_{R,N}(t-r)P_{N}(f(u)-f(\bar{u}_{N}))dr\right \|^{2}_{L^{2}(D)}\leq \uppercase\expandafter{\romannumeral1}_{1}+\uppercase\expandafter{\romannumeral1}_{2}.
		\end{aligned}
	\end{equation*}
	As for $\uppercase\expandafter{\romannumeral1}_{1}$, the definitions of $\mathcal{S}(t)$ and $\mathcal{S}_{R,N}(t){P_{N}}$ yield that for $v\in L^{2}(D)$ and $\sigma\in[0,s]$,
	\begin{equation*}
		\begin{aligned}
			&\|(\mathcal{S}(t)-\mathcal{S}_{R,N}(t)P_{N})v\|^{2}\\
			\leq&C\sum_{k=N+1}^{\infty}(E_{\alpha,1}(-\lambda_{k}^{s}r^{\alpha}))^{2}(v,\phi_{k})^{2}_{L^{2}(D)}\\
			\leq&C\lambda_{N+1}^{-2\sigma}\sum_{k=N+1}^{\infty}(\lambda_{k}^{\sigma}E_{\alpha,1}(-\lambda_{k}^{s}r^{\alpha}))^{2}(v,\phi_{k})^{2}\\
			\leq& C\lambda_{N+1}^{-2\sigma}t^{-2\sigma\alpha/s}\|v\|_{L^{2}(D)}^{2},
		\end{aligned}
	\end{equation*}
	which leads to
	\begin{equation*}
		\begin{aligned}
			\uppercase\expandafter{\romannumeral1}_{1}\leq& C\lambda_{N+1}^{-2\sigma}\mathbb{E}\Bigg(\int_{0}^{t}(t-r)^{-\sigma\alpha/s}\|f(u)\|_{L^{2}(D)}dr\Bigg)^{2}\\
			\leq&C\lambda_{N+1}^{-2\sigma}\int_{0}^{t}(t-r)^{1-2\sigma\alpha/s-\epsilon}\mathbb{E}\|f(u)\|_{L^{2}(D)}^{2}dr\\
			\leq&C\lambda_{N+1}^{-2\sigma}\int_{0}^{t}(t-r)^{1-2\sigma\alpha/s-\epsilon}\mathbb{E}\|u\|_{L^{2}(D)}^{2}dr.
		\end{aligned}
	\end{equation*}
	As for $\uppercase\expandafter{\romannumeral1}_{2}$, Assumption \ref{eqnonassump} and Lemma \ref{lemMit2} show that
	\begin{equation*}
		\begin{aligned}
			\uppercase\expandafter{\romannumeral1}_{2}\leq& C\mathbb{E}\int_{0}^{t}\sum_{k=1}^{\infty}E_{\alpha,1}(-\lambda_{k}^{s}(t-r)^{\alpha})^{2}(f(u)-f(\bar{u}_{N}),\phi_{k})^{2}dr\\
			\leq&C\mathbb{E}\int_{0}^{t}\sum_{k=1}^{\infty}\|f(u)-f(\bar{u}_{N})\|_{L^{2}(D)}^{2}dr\leq C\int_{0}^{t}\sum_{k=1}^{\infty}\mathbb{E}\|u-\bar{u}_{N}\|_{L^{2}(D)}^{2}dr.
		\end{aligned}
	\end{equation*}
	By the definitions of $G$ and $G_{R,S}$, it holds
	\begin{equation*}
		\begin{aligned}
			\uppercase\expandafter{\romannumeral2}\leq& C\int_{0}^{t_{n}}\sum_{k=N+1}^{\infty}\left|{}_{0}\partial^{\frac{1-2H_{2}}{2}}_{r}E_{\alpha,1}(-\lambda_{k}^{s}r^{\alpha})\right|^{2}\|\phi_{k}(y)\|^{2}_{H^{\frac{1-2H_{1}}{2}}_{0}(D)}\|\phi_{k}(x)\|^{2}_{L^{2}(D)}dr\\
			\leq& C\lambda^{-2\sigma}_{N+1}\int_{0}^{t_{n}}\sum_{k=N+1}^{\infty}\left|\lambda_{k}^{\sigma+\frac{1-H_{1}}{2}+\epsilon}r^{-\frac{1-2H_{2}}{2}}E_{\alpha,1-\frac{1-2H_{2}}{2}}(-\lambda_{k}^{s}r^{\alpha})\right|^{2}\\
& \cdot\left(\lambda_{k}^{-\frac{1}{4}-\epsilon}\phi_{k}(x),\phi_{k}(x)\right)^{2}dr\\
			\leq& C\lambda^{-2\sigma}_{N+1}\int_{0}^{t_{n}}r^{2-1+2H_{2}-2-2(\sigma+\frac{1-H_{1}}{2}+\epsilon)\alpha/s}dr,
		\end{aligned}
	\end{equation*}
	where we need to require $2H_{2}-2(\sigma+\frac{1-H_{1}}{2}+\epsilon)\alpha/s>0$ and $\sigma+\frac{1-2H_{1}}{2}< s$, i.e., $2\sigma<\min\{\frac{2sH_{2}}{\alpha}+H_{1}-1,2H_{1}+2s-1\}$. Combining the above estimates, the fact that $\lambda_{k}\geq Ck^{2}$ for $k\in\mathbb{N}^{*}$, and the Gr\"{o}nwall inequality, one can obtain the desired results.
\end{proof}

\subsection{Temporal error analyses}

In this subsection, we give the error estimates of the  Mittag--Leffler Euler integrator and the  corresponding fast Mittag--Leffler Euler integrator, respectively.
\begin{theorem}\label{thmtempcon}
	Let $\bar{u}_{N}$ and $\bar{u}^{n}_{N}$ be the solutions of \eqref{eqsemisch0} and \eqref{equfullscheme}, respectively. Then we have
	\begin{equation*}
		\mathbb{E}\|\bar{u}_{N}-\bar{u}^{n}_{N}\|_{L^{2}(D)}^{2}\leq C\tau^{H_{2}+\frac{\alpha(H_{1}-1)}{2s}-\epsilon}.
	\end{equation*}
\end{theorem}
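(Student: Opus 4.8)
The plan is to bound $\bar u_N(t_n)-\bar u^n_N$ directly from the mild representations \eqref{eqsemisch0sol} and \eqref{equfullscheme}, splitting it into a deterministic source contribution and a stochastic noise contribution, in the spirit of the proofs of Theorems \ref{thmsoblevu} and \ref{thmholderu}. For the source part the two schemes share the operator $\mathcal{S}_{R,N}(t_n-r)P_N$ and differ only in that the semidiscrete solution integrates $f(\bar u_N(r))$ while the fully discrete one freezes $f$ at $f(\bar u_N^{i-1})$ on each $I_i$. Inserting $f(\bar u_N(t_{i-1}))$ splits this into a consistency piece $\sum_i\int_{I_i}\mathcal{S}_{R,N}(t_n-r)P_N\big(f(\bar u_N(r))-f(\bar u_N(t_{i-1}))\big)dr$ and a stability piece. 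The first is controlled by the Lipschitz bound of Assumption \ref{eqnonassump}, the uniform bound $\|\mathcal{S}_{R,N}(t)v\|_{L^2(D)}\le C\|v\|_{L^2(D)}$ coming from Lemma \ref{lemMit2} with $\sigma=0$, and the temporal H\"older regularity of $\bar u_N$ in Theorem \ref{thmregnumsol}; since the latter supplies $\mathbb{E}\|\bar u_N(r)-\bar u_N(t_{i-1})\|^2_{L^2(D)}\le C\tau^{2\gamma}$ for any $\gamma<H_2+\frac{\alpha(H_1-1)}{2s}$, this piece is of the higher order $\tau^{2\gamma}$. The stability piece, after a second application of the Lipschitz bound, produces $C\tau\sum_{i=0}^{n-1}\mathbb{E}\|\bar u_N(t_i)-\bar u^i_N\|^2_{L^2(D)}$, to be absorbed by a discrete Gr\"onwall argument at the end.

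The noise contribution is the dominant one. The key observation is that $\xi^{H_1,H_2}_R$ is exactly the piecewise-constant-in-time average of $\xi^{H_1,H_2}_{RS}$, so after projecting onto the orthonormal basis $\{\phi_k\}$ the mode-$k$ difference of the two noise integrals collapses to a single stochastic integral of the original noise $\xi^{H_1,H_2}$ against the deterministic temporal kernel difference $E_{\alpha,1}(-\lambda_k^{s}(t_n-r)^\alpha)\chi_{(0,t_n)}(r)-\mathcal{S}_{R,k}(t_n,r)$. I would then apply the It\^o isometry of Theorem \ref{thmisometry} to convert the mean-square error into
\begin{equation*}
	\sum_{k=1}^{N}\Big\|{}_{0}\partial^{\frac{1-2H_2}{2}}_{r}\big(E_{\alpha,1}(-\lambda_k^{s}(t_n-r)^\alpha)\chi_{(0,t_n)}(r)-\mathcal{S}_{R,k}(t_n,r)\big)\Big\|^2_{L^2((0,T))}\,\|\phi_k\|^2_{H^{\frac{1-2H_1}{2}}_{0}(D)},
\end{equation*}
and use Remark \ref{Respace} to replace $\|\phi_k\|^2_{H^{\frac{1-2H_1}{2}}_{0}(D)}$ by a constant multiple of $\lambda_k^{(1-2H_1)/2}$.

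The heart of the argument, and the step I expect to be the main obstacle, is estimating this piecewise-constant quadrature error of the Mittag--Leffler kernel in the fractional norm ${}_{0}\partial^{(1-2H_2)/2}_{r}$. I would move the fractional derivative onto the Mittag--Leffler function by Lemma \ref{lemMit1}, use the decay bound $|E_{\alpha,\beta}(z)|\le C(1+|z|)^{-1}$ together with Lemmas \ref{lemMit2} and \ref{lemEkest}, and split the kernel difference into the first time subinterval, on which this fractional derivative is singular, and the smooth remainder, introducing an auxiliary interpolation parameter as in the treatment of the leading temporal term in the proof of Theorem \ref{thmholderu}. The delicate balance is to extract just enough powers of $\tau$ so that the residual power of $\lambda_k$ --- after combining with the factor $\lambda_k^{(1-2H_1)/2}$ from Remark \ref{Respace} and using $\lambda_k\ge Ck^2$ --- leaves $\sum_{k\ge1}$ convergent uniformly in $N$; it is this requirement, rather than any smallness of $\tau$, that fixes the attainable exponent and produces the $N$-independent rate $\tau^{H_2+\frac{\alpha(H_1-1)}{2s}-\epsilon}$, in contrast to the $N$-dependent bound of Theorem \ref{theoremregularizaed1}, where more $\tau$ was traded for a growing power of $N$.

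Collecting the two contributions gives
\begin{equation*}
	\mathbb{E}\|\bar u_N(t_n)-\bar u^n_N\|^2_{L^2(D)}\le C\tau^{H_2+\frac{\alpha(H_1-1)}{2s}-\epsilon}+C\tau\sum_{i=0}^{n-1}\mathbb{E}\|\bar u_N(t_i)-\bar u^i_N\|^2_{L^2(D)},
\end{equation*}
and the discrete Gr\"onwall inequality then yields the claimed estimate.
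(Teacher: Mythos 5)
Your proposal follows essentially the same route as the paper's proof: the identical splitting into a source part (insert $f(\bar u_N(t_{i-1}))$, use Assumption \ref{eqnonassump}, Lemma \ref{lemMit2}, the H\"older regularity of Theorem \ref{thmregnumsol}, and close with discrete Gr\"onwall) and a dominant noise part reduced via Theorem \ref{thmisometry} and Remark \ref{Respace} to the weighted fractional-norm quadrature error of the Mittag--Leffler kernel, which is then controlled with Lemma \ref{lemEkest} and interpolation so that the residual $\lambda_k$-powers leave a sum convergent uniformly in $N$. This is exactly the paper's argument, including your identification of the noise term as dominant and of the $\lambda_k$-bookkeeping as the step that fixes the $N$-independent rate.
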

\begin{proof}
	According to \eqref{eqsemisch0sol} and \eqref{equfullscheme}, it yields
	\begin{equation*}
		\begin{aligned}
			&\mathbb{E}\|\bar{u}_{N}-\bar{u}^{n}_{N}\|_{L^{2}(D)}^{2}\\
			\leq&C\mathbb{E}\Bigg\|\sum_{i=1}^{n}\int_{t_{i-1}}^{t_{i}}\mathcal{S}_{R,N}(t_{n}-r)P_{N}(f(\bar{u}_{N}(r))-f(\bar{u}_{N}^{i-1}))dr\Bigg\|_{L^{2}(D)}^{2}\\
			&+C\mathbb{E}\Bigg\|\sum_{i=1}^{n}\int_{t_{i-1}}^{t_{i}}\int_{D}(G_{R,S}(t_{n}-r,x,y)-G_{R,N}(t_{n},r,x,y))\xi^{H_{1},H_{2}}(y,r)dydr\Bigg\|_{L^{2}(D)}^{2}\\
			\leq& \uppercase\expandafter{\romannumeral1}+\uppercase\expandafter{\romannumeral2}.
		\end{aligned}
	\end{equation*}
Combining Assumption \ref{eqnonassump}, Lemma \ref{lemMit2}, and Theorem \ref{thmregnumsol} leads to
	\begin{equation*}
		\begin{aligned}
			\uppercase\expandafter{\romannumeral1}\leq &C\mathbb{E}\Bigg\|\sum_{i=1}^{n}\int_{t_{i-1}}^{t_{i}}\mathcal{S}_{R,N}(t_{n}-r)P_{N}(f(\bar{u}_{N}(r))-f(\bar{u}_{N}(t_{i-1})))dr\Bigg\|_{L^{2}(D)}^{2}\\
			&+C\mathbb{E}\Bigg\|\sum_{i=1}^{n}\int_{t_{i-1}}^{t_{i}}\mathcal{S}_{R,N}(t_{n}-r)P_{N}(f(\bar{u}_{N}(t_{i-1}))-f(\bar{u}_{N}^{i-1}))dr\Bigg\|_{L^{2}(D)}^{2}\\
			\leq& C\tau^{H_{2}+\frac{\alpha(H_{1}-1)}{2s}-\epsilon}+C\sum_{i=1}^{n}\int_{t_{i-1}}^{t_{i}}\mathbb{E}\|f(\bar{u}_{N}(t_{i-1}))-f(\bar{u}_{N}^{i-1})\|_{L^{2}(D)}^{2}dr\\
			\leq& C\tau^{H_{2}+\frac{\alpha(H_{1}-1)}{2s}-\epsilon}+C\tau\sum_{i=1}^{n}\mathbb{E}\|\bar{u}_{N}(t_{i-1})-\bar{u}_{N}^{i-1}\|_{L^{2}(D)}^{2}dr.
		\end{aligned}
	\end{equation*}
	As for $\uppercase\expandafter{\romannumeral2}$, the definitions of $G_{R,S}$ and $G_{R,N}$ and Theorem \ref{thmisometry} imply that
	\begin{equation*}
		\begin{aligned}
			\uppercase\expandafter{\romannumeral2}\leq&C\sum_{k=1}^{N}\left\|{}_{0}\partial^{\frac{1-2H_{2}}{2}}_{r}(E_{\alpha,1}(-\lambda_{k}^{s}(t_{n}-r)^{\alpha})-\mathcal{S}_{R,k}(t_{n},r))\right\|^{2}_{L^{2}((0,t_{n}))}\|
			\phi_{k}(y)\|^{2}_{H^{\frac{1-2H_{1}}{2}}_{0}(D)}.
		\end{aligned}
	\end{equation*}
	Using Lemma \ref{lemEkest}, Remark \ref{Respace}, and the interpolation theorem \cite{Brenner.2008TMToFEM}, one has
	\begin{equation*}
		\begin{aligned}
			\uppercase\expandafter{\romannumeral2}\leq&C\sum_{k=1}^{N}\lambda_{k}^{-\frac{1}{2}-\epsilon}\left\|\lambda_{k}^{\frac{1-H_{1}}{2}}{}_{0}\partial^{\frac{1-2H_{2}}{2}}_{r}\left(E_{\alpha,1}(-\lambda_{k}^{s}(t_{n}-r)^{\alpha})-\mathcal{S}_{R,k}(t_{n},r)\right)\right\|^{2}_{L^{2}((0,t_{n}))}\\
			\leq&C \tau^{H_{2}+\frac{\alpha(H_{1}-1)}{2s}-\epsilon}.
		\end{aligned}
	\end{equation*}
Thus the proof is completed.
\end{proof}

Next, we begin to consider the errors between $u^{n}_{N}$ and $\bar{u}^{n}_{N}$. The following estimate about $\mathcal{E}_{k}(t)$ is needed.

\begin{lemma}\label{lemErrork}
	Let $\mathcal{E}_{k}(t)$ be defined in \eqref{eqdefErrork}. Then it satisfies
	\begin{equation*}
		|\mathcal{E}_{k}(t)|\leq C e^{-\sqrt{2\pi q L}}.
	\end{equation*}
\end{lemma}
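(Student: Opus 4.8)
The plan is to recognize the finite sum in \eqref{eqdefErrork} as Stenger's sinc (trapezoidal) quadrature rule and to invoke the sinc-quadrature error estimate of \cite{Stenger.1981NmboWcosf}. Parametrizing the deformed contour by $z=\rho(r)$, $r\in\mathbb{R}$, and setting
\[
  F_{k}(r)=-\frac{1}{2\pi\mathbf{i}}\,e^{\rho(r)t}\rho(r)^{\alpha-2}(\rho(r)^{\alpha}+\lambda_{k}^{s})^{-1}\rho'(r),
\]
the exact contour integral equals $\int_{-\infty}^{\infty}F_{k}(r)\,dr$ (with $\Gamma_{\mu}$ given its standard upward orientation, which accounts for the sign in $\omega_{j}$), while, since $z_{j}=\rho(j\bar h)$ and $\omega_{j}=-\frac{\bar h}{2\pi\mathbf{i}}\rho'(j\bar h)$, the quadrature in \eqref{eqdefErrork} is exactly the truncated trapezoidal sum $\bar h\sum_{|j|\le L}F_{k}(j\bar h)$. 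Hence
\[
  \mathcal{E}_{k}(t)=\Bigl(\int_{-\infty}^{\infty}F_{k}(r)\,dr-\bar h\sum_{j=-\infty}^{\infty}F_{k}(j\bar h)\Bigr)+\bar h\sum_{|j|>L}F_{k}(j\bar h),
\]
and it suffices to bound the \emph{discretization} error (the bracket) and the \emph{truncation} error (the tail) separately.

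To control both terms I would first verify that $\rho$, and hence $F_{k}$, continues analytically to a horizontal strip $\mathscr{D}_{d}=\{|\mathrm{Im}\,w|<d\}$ whose half-width $d$ depends only on $\nu$ and $\theta$, and that the continued contour $\rho(w)$ stays in a sector $\Sigma_{\theta'}$ with $\theta'\in(\frac{\pi}{2},\pi)$, $\alpha\theta'<\pi$, bounded away from the origin. On such a sector $\rho(w)^{\alpha}$ lies in a sector of half-angle $\alpha\theta'<\pi$, so it keeps a positive angular distance from the negative real axis where $-\lambda_{k}^{s}$ sits; this yields the resolvent bound $|(\rho(w)^{\alpha}+\lambda_{k}^{s})^{-1}|\le C|\rho(w)|^{-\alpha}$ \emph{uniformly in $k$}. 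On the real axis one has $|\rho(r)|\asymp|\rho'(r)|\asymp e^{|r|}$ and $\mathrm{Re}\,\rho(r)\to-\infty$, so combining these estimates gives
\[
  |F_{k}(r)|\le C\,e^{(\alpha-2)|r|}\,e^{-\alpha|r|}\,e^{|r|}=C\,e^{-|r|},
\]
uniformly in $k$ and in $t\in(0,T]$; the same computation on the two boundary lines of the strip shows that the Stenger norm $N(F_{k},\mathscr{D}_{d})=\int_{\partial\mathscr{D}_{d}}|F_{k}(w)|\,|dw|$ is finite and bounded independently of $k$.

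With analyticity in the strip of half-width $d$ and exponential decay of rate one in hand, Stenger's theorem bounds the discretization error by $Ce^{-2\pi d/\bar h}$, while the tail estimate gives a truncation error $\bar h\sum_{|j|>L}|F_{k}(j\bar h)|\le C\bar h\sum_{|j|>L}e^{-|j|\bar h}\le Ce^{-L\bar h}$. The choice $\bar h=\sqrt{2\pi q/L}$ is precisely the step size that balances the two contributions: it gives $L\bar h=\sqrt{2\pi qL}$ and $2\pi q/\bar h=\sqrt{2\pi qL}$, so that (taking $q\le d$) the discretization error $e^{-2\pi d/\bar h}\le e^{-2\pi q/\bar h}=e^{-\sqrt{2\pi qL}}$ and the truncation error $e^{-L\bar h}=e^{-\sqrt{2\pi qL}}$ are both dominated by $Ce^{-\sqrt{2\pi qL}}$, which is the claimed bound.

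The main obstacle is the uniform-in-$k$ analysis of the resolvent along the analytically continued contour: I must guarantee that for every $w\in\mathscr{D}_{d}$ the point $\rho(w)^{\alpha}$ never hits $-\lambda_{k}^{s}$ and that $|(\rho(w)^{\alpha}+\lambda_{k}^{s})^{-1}|$ decays like $|\rho(w)|^{-\alpha}$ with a constant independent of $k$. This forces a careful calibration of the strip half-width $d$ (equivalently of $\nu,\theta$) so that the continued contour remains inside a sector with $\alpha\theta'<\pi$ and away from the branch point $z=0$ of $z^{\alpha-2}$ and $z^{\alpha}$. Once this uniform sectorial/resolvent estimate and the decay rate one are secured, the rest is a direct application of the sinc-quadrature estimate of \cite{Stenger.1981NmboWcosf}.
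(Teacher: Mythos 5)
Your proposal follows essentially the same route as the paper: the paper likewise reduces the bound to the sinc-quadrature error estimate of \cite{Stenger.1981NmboWcosf}, verifying analyticity of the transformed integrand in a horizontal strip, $e^{-|x|}$ decay on the real line, and integrability on the strip boundary, with the step size $\bar h=\sqrt{2\pi q/L}$ balancing discretization and truncation exactly as you describe. The only structural difference is that the paper treats the discretization/truncation split and the balancing as a black box inside Stenger's theorem, while you unfold it explicitly; that part of your argument is fine, including the orientation/sign bookkeeping for $\omega_j$.

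There is, however, one step that fails as written. You assert that ``the same computation on the two boundary lines of the strip shows that the Stenger norm is finite,'' but the real-axis computation uses $|e^{\rho(r)t}|\le e^{\mu t}$, which relies on $\Re\rho(r)=\mu(1-\sin\nu\cosh r)\le\mu$. On the horizontal line $\Im w=y$ one has $\Re\rho(x+\mathbf{i}y)=\mu\left(1-\sin(\nu-y)\cosh x\right)$, so for $y>\nu$ this tends to $+\infty$ as $|x|\to\infty$ and $|e^{\rho(w)t}|$ blows up doubly exponentially for any $t>0$: the boundary integral is infinite on every line above $\Im w=\nu$. Hence the admissible strip half-width is $d\le\min\{\nu,\tfrac{\pi}{2}-\nu\}$, not merely $d<\tfrac{\pi}{2}-\nu$, and this --- not the constraint you flag --- is the binding calibration: your sector condition $\alpha\theta'<\pi$ is automatic, since $\alpha\in(0,1)$ and $\theta'<\pi$. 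The gap is repairable (take $q\le d\le\nu$, which is consistent with the paper's numerical choice $q=0.05\pi<\nu=0.1\pi$), but as stated your verification of Stenger's hypotheses is incomplete at exactly the delicate point. For comparison, the paper's own proof sidesteps this issue in a different (and arguably less satisfactory) way: it verifies the three strip conditions for $g(r)=\rho(r)^{\alpha-2}\left(\rho(r)^{\alpha}+\lambda_k^s\right)^{-1}\rho'(r)$ with the factor $e^{\rho(r)t}$ dropped altogether, and accordingly allows $q\in(0,\tfrac{\pi}{2}-\nu)$; your version keeps the correct integrand, so fixing the strip width would actually make your proof the more complete of the two.
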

\begin{proof}
	According to \cite{Stenger.1981NmboWcosf}, to obtain the desired results, we just need to check that for $r=x+\mathbf{i}y$, $g(r)=(\rho(r))^{\alpha-2}(\rho(r)^{\alpha}+\lambda_{k}^{s})^{-1}\rho^{\prime}(r)$ satisfies the following properties:
	\begin{enumerate}[(a)]
		\item\label{vcona} for some constant $q>0$, the function $g$ is analytic in $D_{q}=\{z\in\mathbb{C}:|\Im (z)|<q\}$, where $\Im(z)$ means the imaginary part of $z$ and
		\begin{equation*}
			\int_{-q}^{q}|g(x+\mathbf{i}y)|dy\rightarrow 0\quad {\rm as}~~x\rightarrow \pm\infty;
		\end{equation*}
		\item\label{vconb} for all $x\in\mathbb{R}$, there holds
		\begin{equation*}
			|g(x)|<Ce^{-|x|};
		\end{equation*}
		\item\label{vconc} \begin{equation*}
			\lim_{y\rightarrow q^{-}}\Bigg(\int_{\mathbb{R}}|g(x+\mathbf{i}y)|dx+\int_{\mathbb{R}}|g(x-\mathbf{i}y)|dx\Bigg)<\infty.
		\end{equation*}
	\end{enumerate}
Now we begin to verify the above properties.
	By the definition of $\rho(r)$, we have $\rho(x+\mathbf{i}y)=\mu(1-\sin(\nu-y)\cosh(x))-\mathbf{i}\mu\cos(\nu-y)\sinh(x)$. It is easy to verify that for $|\nu-y|<\frac{\pi}{2}$, there exists $\rho(x+\mathbf{i}y)\in \Sigma_{\bar{\theta}}$ with $\bar{\theta}\in(\frac{\pi}{2},\pi)$, which leads to that $g(r)$ is analytic in $D_{q}$ with $q\in(0,\frac{\pi}{2}-\nu)$. Moreover, using the facts that $\rho'(x+\mathbf{i}y)\sim \rho(x+\mathbf{i}y)\sim e^{|x|}$ as $|x|\rightarrow \infty$ and $|z^{\alpha-2}(z^{\alpha}+\lambda_{k}^{s})^{-1}|\leq C|z|^{-2}$ \cite{Stenger.1981NmboWcosf} results in \eqref{vcona}, \eqref{vconb}, and \eqref{vconc} directly.
\end{proof}

\begin{theorem}\label{thmfasttempcon}
	Let $\bar{u}^{n}_{N}$ and $u^{n}_{N}$ be the solutions of \eqref{equfullscheme} and \eqref{equfullschemefast}, respectively.  Let $e^{-2\sqrt{2\pi q L}}\sim \mathcal{O}\left(\tau^{2+H_{2}+\frac{\alpha(H_{1}-1)}{2s}-\epsilon}N^{-2\sigma-2+2H_{1}+\epsilon}\right)$ with $2\sigma<\min\{\frac{2sH_{2}}{\alpha}+H_{1}-1,2H_{1}+2s-1\}$, and $f(u)$ satisfies Assumption \ref{eqnonassump}. Then there holds
	\begin{equation*}
		\mathbb{E}\|\bar{u}^{n}_{N}-u^{n}_{N}\|^{2}_{L^{2}(D)}\leq C\tau^{H_{2}+\frac{\alpha(H_{1}-1)}{2s}-\epsilon}N^{-2\sigma}.
	\end{equation*}
\end{theorem}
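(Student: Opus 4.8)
The plan is to subtract the two representations \eqref{equfullscheme} and \eqref{equfullschemefast} and exploit that $\bar{u}^{n}_{N}$ and $u^{n}_{N}$ differ \emph{only} through the replacement of the exact contour integral $\frac{1}{2\pi\mathbf{i}}\int_{\Gamma_{\mu}}e^{zt}z^{\alpha-2}(z^{\alpha}+\lambda_{k}^{s})^{-1}dz$ by its quadrature, whose defect is $\mathcal{E}_{k}(t)$ in \eqref{eqdefErrork}. Denoting the exact and quadrature weights of the $i$-th time slab and the $k$-th mode by $w_{k}^{n-i}$ and $\tilde{w}_{k}^{n-i}$, relation \eqref{eqdefErrork} gives $w_{k}^{n-i}-\tilde{w}_{k}^{n-i}=\mathcal{E}_{k}(t_{n-i+1})-\mathcal{E}_{k}(t_{n-i})$, so Lemma \ref{lemErrork} yields $|w_{k}^{n-i}-\tilde{w}_{k}^{n-i}|\leq Ce^{-\sqrt{2\pi qL}}$ uniformly in $k,i,n$. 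Using the orthonormality of $\{\phi_{k}\}$ I would split $\mathbb{E}\|\bar{u}^{n}_{N}-u^{n}_{N}\|^{2}_{L^{2}(D)}\leq \mathrm{I}+\mathrm{II}$ into the contribution of the nonlinear source $f$ and that of the noise $\xi^{H_{1},H_{2}}_{R}$.

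For the nonlinear part $\mathrm{I}$ I would add and subtract $\tilde{w}_{k}^{n-i}(P_{N}f(\bar{u}_{N}^{i-1}),\phi_{k})$, producing a quadrature-error piece $\sum_{i,k}(w_{k}^{n-i}-\tilde{w}_{k}^{n-i})(P_{N}f(\bar{u}_{N}^{i-1}),\phi_{k})\phi_{k}$ and a Lipschitz piece $\sum_{i,k}\tilde{w}_{k}^{n-i}(P_{N}(f(\bar{u}_{N}^{i-1})-f(u_{N}^{i-1})),\phi_{k})\phi_{k}$. The first is controlled by the bound on $w_{k}^{n-i}-\tilde{w}_{k}^{n-i}$ together with the linear-growth estimate in Assumption \ref{eqnonassump} and the a priori bound $\mathbb{E}\|\bar{u}_{N}^{i-1}\|^{2}_{L^{2}(D)}\leq C$ (the discrete counterpart of Theorem \ref{thmregnumsol}, obtained along the same lines); for the second I would use $|\tilde{w}_{k}^{n-i}|\leq C\tau$ (since $\tilde{w}_{k}^{n-i}$ differs by $O(\mathcal{E}_{k})$ from the exact integral of the bounded Mittag--Leffler kernel over a slab of length $\tau$) and the Lipschitz estimate in Assumption \ref{eqnonassump}. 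This delivers a recursive bound $\mathrm{I}\leq C e^{-2\sqrt{2\pi qL}}(\cdots)+C\tau\sum_{i=1}^{n}\mathbb{E}\|\bar{u}_{N}^{i-1}-u_{N}^{i-1}\|^{2}_{L^{2}(D)}$, ready for a discrete Gr\"onwall argument.

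The noise part $\mathrm{II}$ is the heart of the estimate. Since both schemes use the same piecewise-constant noise, $\mathrm{II}=\sum_{k=1}^{N}\mathbb{E}|\sum_{i=1}^{n}(w_{k}^{n-i}-\tilde{w}_{k}^{n-i})(\xi^{H_{1},H_{2}}_{R}(\cdot,t_{i}),\phi_{k})|^{2}$, and for each fixed $k$ I would rewrite the inner sum as a single stochastic integral against $\xi^{H_{1},H_{2}}$ with space factor $\phi_{k}(y)$ and temporal factor the piecewise-constant function whose nodal values are $\tau^{-1}(w_{k}^{n-i}-\tilde{w}_{k}^{n-i})$. Applying the It\^o isometry Theorem \ref{thmisometry} bounds each term by $\|{}_{0}\partial^{\frac{1-2H_{2}}{2}}_{r}(\text{temporal factor})\|^{2}_{L^{2}((0,T))}\,\|\phi_{k}\|^{2}_{H^{\frac{1-2H_{1}}{2}}_{0}(D)}$. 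By Remark \ref{Respace} and $\lambda_{k}\geq Ck^{2}$ one has $\|\phi_{k}\|^{2}_{H^{\frac{1-2H_{1}}{2}}_{0}(D)}\leq C\lambda_{k}^{\frac{1-2H_{1}}{2}}\leq Ck^{1-2H_{1}}$, so summing over $k\leq N$ produces the factor $N^{2-2H_{1}}$; the fractional-derivative norm of the temporal step function, whose jumps are all $O(e^{-\sqrt{2\pi qL}})$ over slabs of length $\tau$, supplies the remaining negative powers of $\tau$. Inserting the hypothesis $e^{-2\sqrt{2\pi qL}}\sim\mathcal{O}(\tau^{2+H_{2}+\frac{\alpha(H_{1}-1)}{2s}-\epsilon}N^{-2\sigma-2+2H_{1}+\epsilon})$ is precisely what turns $N^{2-2H_{1}}e^{-2\sqrt{2\pi qL}}(\cdots)$ into $C\tau^{H_{2}+\frac{\alpha(H_{1}-1)}{2s}-\epsilon}N^{-2\sigma}$.

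Finally I would combine $\mathrm{I}$ and $\mathrm{II}$ and invoke the discrete Gr\"onwall inequality to absorb the recursive term from $\mathrm{I}$, obtaining the claimed estimate. I expect the main obstacle to be the precise bookkeeping in $\mathrm{II}$: sharply bounding the fractional Sobolev norm in time of the piecewise-constant weight whose nodal jumps all carry the quadrature defect $\mathcal{E}_{k}$, and verifying that the accumulated powers of $\tau$ and $N$ (together with the $N^{2-2H_{1}}$ from $\sum_{k\leq N}\|\phi_{k}\|^{2}_{H^{\frac{1-2H_{1}}{2}}_{0}(D)}$) are exactly offset by the assumed exponential decay $e^{-2\sqrt{2\pi qL}}$, leaving no residual factor of $M=T/\tau$.
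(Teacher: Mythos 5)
Your overall architecture is the paper's: the same three-way decomposition (a Lipschitz-in-$f$ term, a quadrature-defect term acting on $f$, and a quadrature-defect term acting on the regularized noise), the defect identified with differences of $\mathcal{E}_{k}(t)$ from \eqref{eqdefErrork} and bounded by Lemma \ref{lemErrork}, the It\^o isometry of Theorem \ref{thmisometry} plus Remark \ref{Respace} and $\lambda_{k}\geq Ck^{2}$ for the noise part, and a discrete Gr\"onwall closure. Your handling of the two $f$-terms is a harmless mirror image of the paper's splitting (you let the defect hit $f(\bar{u}^{i-1}_{N})$ and the quadrature weights carry the Lipschitz difference; the paper does the opposite), and both reduce to the same recursion plus an $e^{-2\sqrt{2\pi qL}}\tau^{-2}$ remainder.

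The gap is in your noise term $\mathrm{II}$, exactly where you flagged it, and as described it does not close. You hand the isometry the piecewise-constant temporal factor with nodal values $\tau^{-1}\mathcal{E}^{n-i}_{k}$. Lemma \ref{lemErrork} controls only the \emph{size} of these values, not their variation, so this step function may have $n\sim\tau^{-1}$ jumps of height $\sim\tau^{-1}e^{-\sqrt{2\pi qL}}$. Each jump of height $J$ contributes $\sim J^{2}\tau^{1-2\mu}$ to the Gagliardo seminorm with $\mu=\frac{1-2H_{2}}{2}$, so summing over the jumps gives $\left\|{}_{0}\partial^{\frac{1-2H_{2}}{2}}_{r}(\text{temporal factor})\right\|^{2}_{L^{2}}\lesssim e^{-2\sqrt{2\pi qL}}\tau^{-2}\cdot\tau^{-(1-2H_{2})}$, not $e^{-2\sqrt{2\pi qL}}\tau^{-2}$. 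For $H_{2}<\frac{1}{2}$ this leaves precisely the residual factor $\tau^{-(1-2H_{2})}=\mathcal{O}(M^{1-2H_{2}})$ you were worried about, and the stated hypothesis on $e^{-2\sqrt{2\pi qL}}$ is not calibrated to absorb it. The paper avoids this by never putting the defects inside the isometry: it bounds $|\mathcal{E}^{n-i}_{k}|\leq Ce^{-\sqrt{2\pi qL}}$ and extracts this factor from the stochastic sum \emph{first}, after which the noise terms telescope, $\sum_{i=1}^{n}\bigl(\xi^{H_{1},H_{2}}_{R}(\cdot,t_{i}),\phi_{k}\bigr)=\tau^{-1}\int_{0}^{t_{n}}\int_{D}\phi_{k}(y)\xi^{H_{1},H_{2}}(y,r)\,dy\,dr$, so the temporal factor fed to Theorem \ref{thmisometry} is the constant $\tau^{-1}\mathbf{1}$, whose fractional-derivative norm costs only $\tau^{-2}$ after squaring; this yields the bound $Ce^{-2\sqrt{2\pi qL}}\tau^{-2}\lambda_{N}^{1-H_{1}+\epsilon}$ to which the hypothesis on $L$ is matched. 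To repair your version you must either perform this extraction/telescoping step, or establish additional smallness of the increments $\mathcal{E}^{n-i-1}_{k}-\mathcal{E}^{n-i}_{k}$ (i.e., smoothness of the quadrature error in $t$), which Lemma \ref{lemErrork} does not supply.
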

\begin{proof}
	Introduce
	\begin{equation*}
		\begin{aligned}
			\mathcal{E}^{n-i}_{k}=&\int_{t_{i-1}}^{t_{i}}E_{\alpha,1}(-\lambda_{k}^{s}(t_{n}-r)^{\alpha})dr-\sum_{j=-L}^{L}\omega_{j}(e^{z_{j}t_{n-i+1}}-e^{z_{j}t_{n-i}})z_{j}^{\alpha-2}(z_{j}^{\alpha}+\lambda_{k}^{s})^{-1}.
		\end{aligned}
	\end{equation*}
	According to \eqref{equfullscheme} and \eqref{equfullschemefast}, we have
	\begin{equation*}
		\begin{aligned}
			&\mathbb{E}\|\bar{u}^{n}_{N}-u^{n}_{N}\|^{2}_{L^{2}(D)}\\
			\leq&C\mathbb{E}\Bigg\|\sum_{i=1}^{n}\sum_{k=1}^{N}\Bigg(\int_{t_{i-1}}^{t_{i}}E_{\alpha,1}(-\lambda_{k}^{s}(t_{n}-r)^{\alpha})(P_{N}f(\bar{u}^{i-1}_{N}),\phi_{k}))\phi_{k}dr\\
			&-\sum_{j=-L}^{L}\omega_{j}(e^{z_{j}t_{n-i+1}}-e^{z_{j}t_{n-i}})z_{j}^{\alpha-2}(z_{j}^{\alpha}+\lambda_{k}^{s})^{-1}(P_{N}f(u^{i-1}_{N}),\phi_{k})\phi_{k}\Bigg)\Bigg\|_{L^{2}(D)}^{2}\\
			&+C\mathbb{E}\Bigg\|\sum_{i=1}^{n}\sum_{k=1}^{N}\mathcal{E}^{n-i}_{k}\left(\xi^{H_{1},H_{2}}_{R}(x,t_{i}),\phi_{k}\right)\phi_{k}\Bigg\|_{L^{2}(D)}^{2}\\
			\leq&C\mathbb{E}\Bigg\|\sum_{i=1}^{n}\int_{t_{i-1}}^{t_{i}}\mathcal{S}_{RN}(t_{n}-r)P_{N}(f(\bar{u}^{i-1}_{N})-f(u^{i-1}_{N}))dr\Bigg\|_{L^{2}(D)}^{2}\\
			&+C\mathbb{E}\Bigg\|\sum_{i=1}^{n}\sum_{k=1}^{N}\mathcal{E}^{n-i}_{k}(P_{N}f(u^{i-1}_{N}),\phi_{k})\phi_{k}\Bigg\|_{L^{2}(D)}^{2}\\
			&+C\mathbb{E}\Bigg\|\sum_{i=1}^{n}\sum_{k=1}^{N}\mathcal{E}^{n-i}_{k}\left(\xi^{H_{1},H_{2}}_{R}(x,t_{i}),\phi_{k}\right)\phi_{k}\Bigg\|_{L^{2}(D)}^{2}\\
			\leq&\uppercase\expandafter{\romannumeral1}+\uppercase\expandafter{\romannumeral2}+\uppercase\expandafter{\romannumeral3}.
		\end{aligned}
	\end{equation*}
	Using Lemma \ref{lemMit2}, one can get
	\begin{equation*}
		\begin{aligned}
			\uppercase\expandafter{\romannumeral1}\leq& C\sum_{i=1}^{n}\int_{t_{i-1}}^{t_{i}}\mathbb{E}\|f(\bar{u}^{i-1}_{N})-f(u^{i-1}_{N})\|_{L^{2}(D)}^{2}dr\\
			\leq& C\sum_{i=1}^{n}\int_{t_{i-1}}^{t_{i}}\mathbb{E}\|\bar{u}^{i-1}_{N}-u^{i-1}_{N}\|_{L^{2}(D)}^{2}dr
		\end{aligned}
	\end{equation*}
	and
	\begin{equation*}
		\begin{aligned}
			\uppercase\expandafter{\romannumeral2}\leq&Ce^{-2\sqrt{2\pi q L}}\mathbb{E}\left\|\sum_{i=1}^{n}P_{N}f(u_{N}^{i-1})\right\|_{L^{2}(D)}^{2}\\
			\leq&Ce^{-2\sqrt{2\pi q L}}\mathbb{E}\left\|\sum_{i=1}^{n}\left(f(u_{N}^{i-1})-f(\bar{u}_{N}^{i-1})\right)\right\|_{L^{2}(D)}^{2}+Ce^{-2\sqrt{2\pi d L}}\mathbb{E}\left\|\sum_{i=1}^{n}P_{N}f(\bar{u}_{N}^{i-1})\right\|_{L^{2}(D)}^{2}\\
			\leq&Ce^{-2\sqrt{2\pi q L}}\mathbb{E}\left(\sum_{i=1}^{n}\left\|u_{N}^{i-1}-\bar{u}_{N}^{i-1}\right\|_{L^{2}(D)}\right)^{2}+Ce^{-2\sqrt{2\pi d L}}\mathbb{E}\left\|\sum_{i=1}^{n}P_{N}f(\bar{u}_{N}^{i-1})\right\|_{L^{2}(D)}^{2}\\
			\leq& Ce^{-2\sqrt{2\pi q L}}\tau^{-2}\sum_{i=1}^{n}\int_{t_{i-1}}^{t_{i}}\mathbb{E}\left\|\bar{u}^{i-1}_{N}-u^{i-1}_{N}\right\|_{L^{2}(D)}^{2}dr+Ce^{-2\sqrt{2\pi d L}}\tau^{-2}.
		\end{aligned}
	\end{equation*}
	According to Lemma \ref{lemErrork}, there holds
	\begin{equation*}
		\begin{aligned}
			\uppercase\expandafter{\romannumeral3}\leq& Ce^{-2\sqrt{2\pi d L}} \mathbb{E}\Bigg\|\sum_{i=1}^{n}\sum_{k=1}^{N}\left(\xi^{H_{1},H_{2}}_{R}(x,t_{i}),\phi_{k}\right)\phi_{k}\Bigg\|_{L^{2}(D)}^{2}\\
			\leq&Ce^{-2\sqrt{2\pi q L}} \mathbb{E}\Bigg\|\sum_{i=1}^{n}\sum_{k=1}^{N}\frac{1}{\tau}\int_{I_{i}}\int_{D}\phi_{k}(y)\xi^{H_{1},H_{2}}(y,r)dydr\phi_{k}(x)\chi_{I_{i}}(t)\Bigg\|_{L^{2}(D)}^{2}\\
			\leq&Ce^{-2\sqrt{2\pi q L}}\tau^{-2}\sum_{k=1}^{N}\left\|{}_{0}\partial_{t}^{\frac{1-2H_{2}}{2}}1\right\|_{L^{2}(0,T)}^{2}\|\phi_{k}(y)\|_{H^{\frac{1-2H_{1}}{2}}_{0}(D)}^{2}\\
			\leq&Ce^{-2\sqrt{2\pi q L}}\tau^{-2}\sum_{k=1}^{N}\lambda_{k}^{-\frac{1}{2}-\epsilon}\left\|{}_{0}\partial_{t}^{\frac{1-2H_{2}}{2}}1\right\|_{L^{2}(0,T)}^{2}\lambda_{k}^{1-H_{1}+\epsilon}\\
			\leq&Ce^{-2\sqrt{2\pi q L}}\tau^{-2}\lambda_{N}^{1-H_{1}+\epsilon}.
		\end{aligned}
	\end{equation*}
After gathering the above estimates, the desired results can be achieved.
\end{proof}


\section{Numerical experiments}
In this section, some numerical experiments are performed to validate the convergence and efficiency of our numerical scheme. Here we choose $f(u)=\sin(u)$.  Since the exact solution is unknown, we measure the convergence rates by
\begin{equation*}
	{\rm Rate}=\frac{\ln(e_{N}/e_{2N})}{\ln(2)},\quad {\rm Rate}=\frac{\ln(e_{\tau}/e_{\tau/2})}{\ln(2)},
\end{equation*}
where
\begin{equation*}
	\begin{aligned}
		&e_{N}=\left (\frac{1}{l}\sum_{i=1}^{l}\left\|u^{M}_{N}(\omega_{i})-u^{M}_{2N}(\omega_{i})\right\|^{2}_{L^{2}(D)}\right )^{1/2},\\
		&e_{\tau}=\left (\frac{1}{l}\sum_{i=1}^{l}\|u_{\tau}(\omega_{i})-u_{\tau/2}(\omega_{i})\|^{2}_{L^{2}(D)}\right )^{1/2}
	\end{aligned}
\end{equation*}
with $u^{M}_{N}$ and $u_{\tau}$ being the solutions at time $T=t_{M}$ with mesh size $h$ and time step size $\tau$, respectively.

In the numerical experiments, we take $\mu=7$, $\nu=0.1\pi$, and $q=0.05\pi$.
\begin{example}
	In this example, we take $l=100$, $L=200$, and $T=0.1$. To show the temporal convergence, we take $N=256$ to eliminate the influence from spatial discretization. We show the errors and convergence rates with different $\alpha$, $s$, $H_{1}$, and $H_{2}$ in Table \ref{tab:time} (the numbers in the bracket in the last column denote the theoretical rates predicted by Theorem \ref{thmspacon}). It is easy to see that all the numerical results are the same with our predicted ones.
	\begin{table}[htbp]
		\caption{Temporal errors and convergence rates}
		\begin{tabular}{ccccccl}
			\hline
			($H_1$,$H_2$,$s$,$\alpha$) & 8 & 16 & 32 & 64&128 & Rate \\
			\hline
			(0.2,0.2,0.7,0.3) & 6.603E-02 & 6.582E-02 & 6.975E-02 & 6.407E-02&6.002E-02 & $\approx0.0344(=0.0286)$ \\
			(0.2,0.5,0.7,0.7) & 8.488E-02 & 8.230E-02 & 8.014E-02 & 6.801E-02&6.300E-02 & $\approx0.1075(=0.1)$ \\
			(0.3,0.4,0.7,0.3) & 1.921E-02 & 1.697E-02 & 1.329E-02 & 1.139E-02&1.015E-02 & $\approx0.2299(=0.25)$ \\
			(0.4,0.5,0.4,0.3) & 3.580E-02 & 3.310E-02 & 2.342E-02 & 1.956E-02&1.624E-02 & $\approx0.2851(=0.275)$ \\
			(0.5,0.4,0.4,0.3) & 4.563E-02 & 4.250E-02 & 3.380E-02 & 3.195E-02&2.414E-02 & $\approx0.2296(=0.2125)$ \\
			(0.5,0.5,0.7,0.6) & 3.224E-02 & 2.737E-02 & 2.079E-02 & 1.786E-02&1.402E-02 & $\approx0.3004(=0.2857)$ \\
			\hline
		\end{tabular}
		\label{tab:time}
	\end{table}
\end{example}

\begin{example}
	In this example, we take $l=100$, $L=200$, and $T=0.1$. To show the spatial convergence, we take $\tau=T/2048$ to avoid the influence from temporal discretization. The numerical results with different $\alpha$, $s$, $H_{1}$, and $H_{2}$ are presented in Table \ref{tab:spac}, where the numbers in the bracket in the last column denote the theoretical rates predicted by Theorem \ref{thmspacon}. All the results agree with the predicted ones and all the convergence rates are optimal.
	\begin{table}[htbp]
		\caption{Spatial errors and convergence rates}
		\begin{tabular}{ccccccl}
			\hline
			($H_1$,$H_2$,$s$,$\alpha$) & 4 & 8 & 16 & 32&64 & Rate \\
			\hline
			(0.2,0.5,0.6,0.3) & 6.717E-02 & 5.239E-02 & 4.238E-02 & 3.294E-02&2.461E-02 & $\approx0.3621(=0.4)$ \\
			(0.2,0.5,0.7,0.6) & 1.069E-01 & 8.927E-02 & 6.927E-02 & 5.303E-02&3.785E-02 & $\approx0.3746(=0.3667)$ \\
			(0.4,0.5,0.4,0.6) & 2.812E-01 & 2.985E-01 & 2.806E-01 & 2.797E-01&2.751E-01 & $\approx0.008(=0.0667)$ \\
			(0.5,0.3,0.4,0.4) & 2.725E-01 & 2.569E-01 & 2.470E-01 & 2.449E-01&2.105E-01 & $\approx0.0932(=0.1)$ \\
			(0.5,0.4,0.8,0.2) & 1.092E-02 & 5.746E-03 & 3.253E-03 & 1.518E-03&6.773E-04 & $\approx1.0027(=1.1)$ \\
			(0.5,0.4,0.9,0.2) & 8.053E-03 & 3.599E-03 & 1.434E-03 & 5.976E-04&2.444E-04 & $\approx1.2606(=1.3)$ \\
			\hline
		\end{tabular}
		\label{tab:spac}
	\end{table}
\end{example}

\begin{example}
	Furthermore, to show the efficiency of our algorithm, we take $T=0.1$, $L=200$, $\alpha=0.7$, $s=0.5$, and $H_{1}=H_{2}=0.5$. The CPU times with different $M$ are shown in Fig. \ref{fig:cputime}. It can be noted that our provided fast algorithm (see \eqref{equfullschemefast}) is more effective  as $M$ becomes larger, and the growth of CPU time of the classical algorithm (see \eqref{equfullscheme}) is nearly twice as much as the one of fast algorithm.
	\begin{figure}[tbph]
		\centering
		\includegraphics[width=0.7\linewidth]{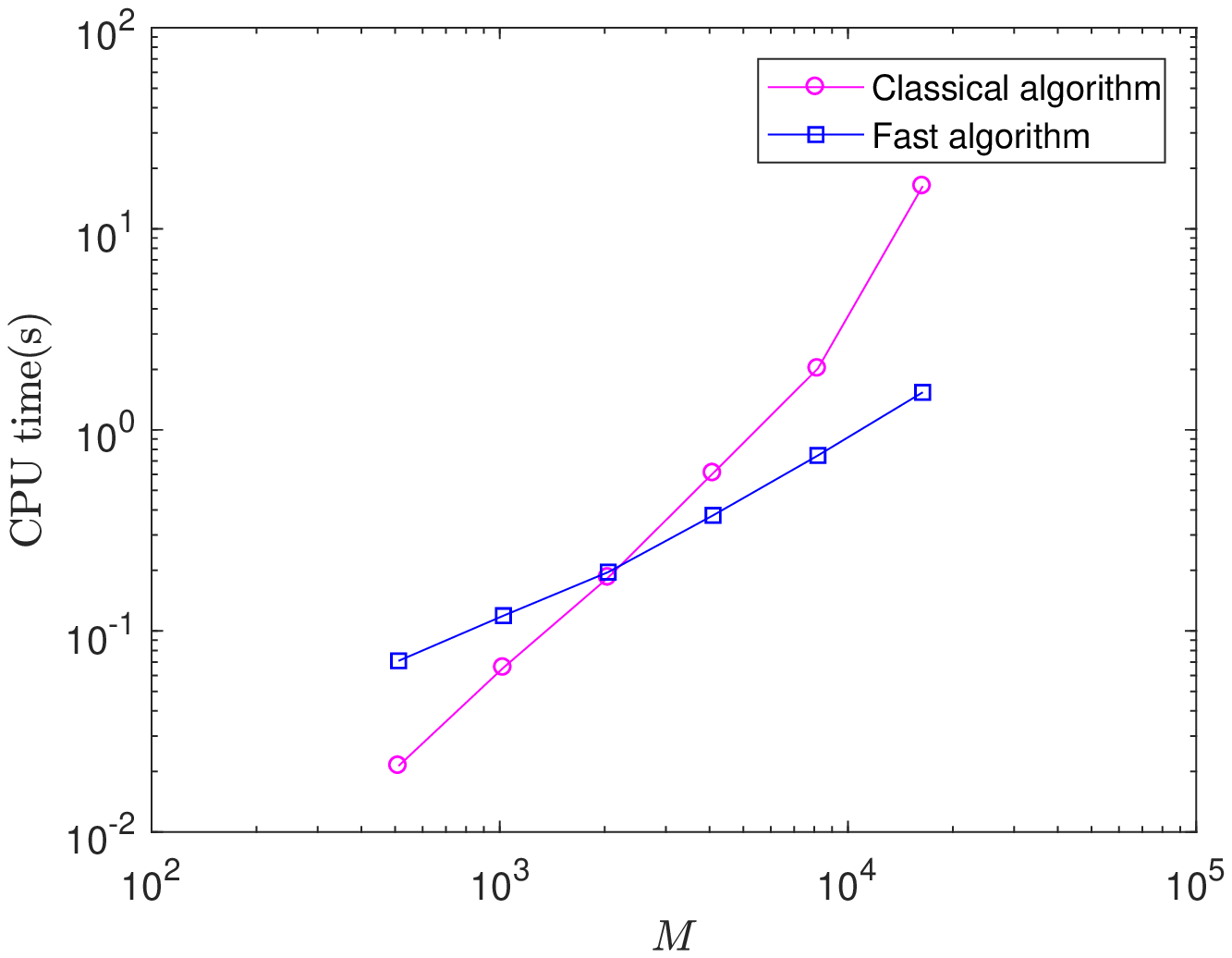}
		\caption{}
		\label{fig:cputime}
	\end{figure}
\end{example}

\section{Conclusions}

Taking fractional Brownian sheet as the external noise of the model governing the probability density function of the competition dynamics between super- and sub- diffusions, we discuss the numerical scheme of the built  stochastic differential equation.  Based on the regularity of the solution of the equation, we first provide a new Wong--Zakai approximation for fractional Brownian sheet and obtain the optimal convergence.  An efficient fully discrete scheme is developed by using spectral Galerkin method in space and fast Mittag--Leffler Euler integrator in time.  The strict error analyses of the fully discrete scheme are presented, and the proposed numerical examples validate the effectiveness of the algorithm.

%
%
%

\bibliographystyle{siamplain}
\bibliography{ref}

\end{document}